\numberwithin{equation}{section}
\def\Re{{\sf Re}\,}
\def\Im{{\sf Im}\,}
\newcommand{\D}{\mathbb D}
\newcommand{\R}{\mathbb R}
\newcommand{\Ha}{\mathbb H}
\newcommand{\C}{\mathbb C}
\newcommand{\Aut}{{\sf Aut}(\mathbb D)}
\newcommand{\oD}{\overline{\mathbb D}}
\newcommand{\N}{\mathbb N}
\def\Re{{\sf Re}\,}
\def\Im{{\sf Im}\,}
\newcommand{\strip}{\mathbb{S}}
\def\Aut{{\sf Aut}}
\def\Re{{\sf Re}\,}
\def\Im{{\sf Im}\,}
\def\Re{{\sf Re}\,}
\def\Im{{\sf Im}\,}
\def\1#1{\overline{#1}}
\def\2#1{\widetilde{#1}}
\def\3#1{\widehat{#1}}
\def\4#1{\mathbb{#1}}
\def\5#1{\frak{#1}}
\def\6#1{{\mathcal{#1}}}
\def\Re{{\sf Re}\,}
\def\Im{{\sf Im}\,}
\newcommand{\mcite}[1]{\csname b@#1\endcsname}
\theoremstyle{theorem}
\def\Aut{{\sf Aut}}
\def\Re{{\sf Re}\,}
\def\Im{{\sf Im}\,}
\newtheorem{theorem}{Theorem}[section]
\newtheorem{lemma}[theorem]{Lemma}
\newtheorem{proposition}[theorem]{Proposition}
\newtheorem{corollary}[theorem]{Corollary}
\theoremstyle{definition}
\newtheorem{definition}[theorem]{Definition}
\theoremstyle{remark}
\newtheorem{remark}[theorem]{Remark}
\numberwithin{equation}{section}
\title[Speeds of convergence]{Speeds of convergence of orbits of non-elliptic semigroups of holomorphic self-maps of the unit disc}
\author[F. Bracci]{Filippo Bracci}
\address{F. Bracci: Dipartimento di Matematica, Universit\`a di Roma ``Tor Vergata", Via della Ricerca
Scientifica 1, 00133, Roma, Italia.} \email{fbracci@mat.uniroma2.it}
\dedicatory{To my friend Yuri Kozitsky in occasion of his 70th birthday}
\subjclass[2010]{Primary 37C10, 30C35; Secondary 30D05, 30C80, 37F99, 37C25}
\keywords{Semigroups of holomorphic functions; hyperbolic geometry; dynamical systems}
\thanks{Partially supported by PRIN {\sl Real and Complex
Manifolds: Topology, Geometry and holomorphic dynamics} n.2017JZ2SW5 and  by the MIUR Excellence Department Project awarded to the
Department of Mathematics, University of Rome Tor Vergata, CUP E83C18000100006}
\long\def\REM#1{\relax}
\begin{document}
\maketitle
\tableofcontents

\selectlanguage{english}
\begin{abstract}
We introduce  three quantities related to orbits of non-elliptic continuous semigroups of holomorphic self-maps of the unit disc, the total speed, the orthogonal speed and the tangential speed and show how they are related and what can be inferred from those. 
\end{abstract}

\section{Introduction}

Continuous semigroups of holomorphic self-maps of the unit disc $\D$, or for short, semigroups in $\D$, have been studied since the beginning of the previous century and are still a subject of interest, from the dynamical point of view, the analytic point of view and the geometric point of view, and also, for different applications.

In this paper, we consider {\sl non-elliptic} semigroups in $\D$. For such a non-elliptic semigroup $(\phi_t)$ it is well known that there exists a unique point $\tau\in\partial \D$, the Denjoy-Wolff point of $(\phi_t)$, such that the orbits of $(\phi_t)$ converges to $\tau$ uniformly on compacta. 

The main focus of this paper is to attach to any non-elliptic semigroup in $\D$, three quantities, that we call {\sl speeds}, which have interesting properties according to the type and the dynamics of the semigroup. 

The first quantity, the {\sl total speed} $v(t)$, is nothing but the hyperbolic distance $\omega(0,\phi_t(0))$ of $\phi_t(0)$ from the origin, for $t\geq 0$. This quantity is pretty much related to  the {\sl divergence rate} as defined in \cite{AroBra16}, and, indeed, the quotient $v(t)/t$ always converges as $t\to\infty$ to the so-called {\sl spectral value} of the semigroup. In particular, for parabolic semigroups, $v(t)/t\to 0$ as $t\to\infty$. We show with an example of a parabolic semigroup of zero hyperbolic step, whose orbits converge non-tangentially to the Denjoy-Wolff point, that for parabolic semigroups there is no better estimate, namely, $v(t)$ converges to $\infty$ at a speed which is always less than $t$ but can be as close to $t$ as wanted.

The total speed is always bounded from below by $-1/4\log t$, in the sense that $\liminf[v(t)-\frac{1}{4}\log t]>-\infty$. However, for hyperbolic semigroups, $1/4\log t$ can be replaced by $(\lambda/2) t$ (where $\lambda>0$ is the spectral value) and, for parabolic semigroups of positive hyperbolic step, by $\log t$. 

The total speed can be decomposed, up to a universal additive constant, as the sum of two other quantities, the {\sl orthogonal speed} $v^o(t)$ and the {\sl tangential speed} $v^T(t)$. This is a general fact of hyperbolic geometry which we prove in Section 3: given a curve $\gamma:[0,+\infty)\to \D$  starting from $0$, converging to point $\sigma\in\partial \D$, the {\sl orthogonal projection} of $\gamma(t)$ over $(-1,1)\sigma$ is the (unique) point  $\pi(\gamma(t))\in (-1,1)\sigma$ such that 
\[
\omega(\pi(\gamma(t)), \gamma(t))=\inf\{\omega(r\sigma,\gamma(t)): r\in (-1,1) \}. 
\]
Then, for all $t\geq 0$,
\[
 \omega(\pi(\gamma(t)),\gamma(t))+\omega(0,\pi(\gamma(t)))-\frac{1}{2}\log 2\leq \omega(0,\gamma(t))\leq  \omega(\pi(\gamma(t)),\gamma(t))+\omega(0,\pi(\gamma(t))).
\]
Since $(-1,1)\sigma$ is a geodesic for the hyperbolic distance, the previous formula can be considered a sort of Pytaghoras' theorem.

In case of a non-elliptic semigroup $(\phi_t)$, we define the {\sl orthogonal speed} $v^o(t):=\omega(0,\pi(\phi_t(0)))$, where $\pi$ is the orthogonal projection on $(-1,1)\tau$, where $\tau$ is the Denjoy-Wolff point of $(\phi_t)$. We also define the {\sl tangential speed} $v^T(t):=\omega(\phi_t(0), \pi(\phi_t(0)))$. By the previous formula,
\[
v(t)\sim v^o(t)+v^T(t),
\]
where, here, $\sim$ means that they have the same asymptotic behavior. 

The tangential speed is related to the slope of convergence of orbits. In particular, $v^T(t)\leq C$ for some $C>0$ and for all $t\geq 0$ if and only if the orbit $[0,\infty)\ni t\mapsto \phi_t(0)$ converges non-tangentially to the Denjoy-Wolff point.

For semigroups, another interesting relation holds, namely, for all $t\geq 0$,
\[
v^T(t)\leq v^o(t)+4\log 2.
\]
The previous inequalities imply also that there exist universal constants $C_1, C_2\in \R$ such that
\[
v^o(t)+C_1\leq v(t)\leq 2v^o(t)+C_2
\]
for all $t\geq 0$.

The previous definitions of speeds have  Euclidean counterparts and some previous results can be translated in terms of speeds using such a dictionary. It turns out that, for instance, a  recent result of D. Betsakos \cite{Bet} can be rephrased in terms of speeds, namely, for all non-elliptic semigroups, $v^o(t)\geq \frac{1}{4}\log t+C$ for all $t\geq 0$ and a constant $C\in \R$ (while, for parabolic semigroups of positive hyperbolic step, $1/4\log t$ can be replaced by $1/2\log t$). 

Besides settling the notions of speeds and proving the aforementioned results, in this paper we provide a direct computation of total, orthogonal and tangential speeds in some cases (essentially when the image of the Koenigs function is a vertical angular sector). 

The paper ends with a section of open questions which naturally arise from the developed theory.

\section{Hyperbolic geometry in simply connected domain}

Let $\D:=\{\zeta\in\C: |\zeta|<1\}$. We denote by $\kappa_\D(z;v)$ the hyperbolic norm of $v\in \C$ at $z\in \D$, namely,
\[
\kappa_D(z;v):=\frac{|v|}{1-|z|^2}.
\]
If $\gamma:[0,1]\to \D$ is a Lipschitz continuous curve, the {\sl hyperbolic length} of $\gamma$ is
\[
\ell_\D(\gamma):=\int_0^1\kappa_\D(\gamma(t);\gamma'(t))dt.
\]
The integrated distance, {\sl i.e.}, the {\sl hyperbolic distance} in $\D$ is denoted by $\omega$, namely,
\[
\omega(z,w)=\inf_{\gamma}\ell_\D(\gamma),
\]
where $\gamma$ is any Lipschitz continuous curve joining $z$ and $w$. It is well known that $\omega(z,w)=\frac{1}{2}\log \frac{1+|T_z(w)|}{1-|T_z(w)|}$, where $T_z(w)=\frac{z-w}{1-\overline{z}w}$ is an automorphism of $\D$. 

If $\Omega\subsetneq \C$ is a simply connected domain and $z\in \Omega$, $v\in \C$, given a Riemann map $f:\D \to \Omega$, we let 
\[
\kappa_\Omega(z;v):=\kappa_\D(f^{-1}(z); \frac{v}{f'(f^{-1}(z))}).
\]
Similarly, we define the hyperbolic length $\ell_\Omega$ of a curve and the hyperbolic distance $k_\Omega$ between points of $\Omega$. By Schwarz's Lemma, all these hyperbolic quantities are invariant under biholomorphisms and are decreasing under the action of holomorphic functions.

A {\sl geodesics} for the hyperbolic distance is a smooth curve such that the hyperbolic length among any two points of the curve coincide with the hyperbolic distance between the two points. Using the conformal invariance of the hyperbolic distance, it follows studying the case of the unit disc that for every two points there exists a unique (up to parameterization) geodesic joining the two points.

Let $\Ha:=\{w\in \C: \Re w>0\}$ be the right half plane.

Since $\Ha$ is biholomorphic to $\D$ via a Cayley transform $z\mapsto (1+z)/(1-z)$, one can easily prove that
\[
k_{\Ha}(w_1,w_2)=\frac{1}{2}\log \frac{1+\left\vert \frac{w_1-w_2}{w_1+\overline{w_2}}\right\vert}{1-\left\vert \frac{w_1-w_2}{w_1+\overline{w_2}}\right\vert}, \quad w_1,w_2\in \Ha,
\]
and
\begin{equation}\label{Eq:metric-semiplano}
\kappa_\Ha(w;v)=\frac{|v|}{2\Re w}, \quad w\in \Ha, \ v\in \C.
\end{equation}

Moreover, one can easily see that  both lines parallel to the real axis, and arcs of circles orthogonal to the imaginary axis are geodesics in $\Ha$.

Finally, using Carath\'eodory's prime-ends topology (see, {\sl e.g.}, \cite{ColLohbook66}), one can see that for any $z_0\in \Omega$ and any prime end $\underline{x}\in \partial_C\Omega$ (here $\partial_C\Omega$ denotes the set of prime-ends of $\Omega$ endowed with the Carath\'eodory topology), there exists a unique geodesic $\gamma:[0,+\infty)\to \Omega$, parametrized by hyperbolic arc length, so that $\gamma(0)=z_0$ and $\gamma(t)$ converges to $\underline{x}$ in the Carath\'eodory topology. Indeed, this is true in $\oD$ with the Euclidean topology, and since Riemann mappings are isometries for the hyperbolic distance and homeomorphisms for the Carath\'eodory topology and $\oD$ is homeomorphic to $\D\cup \partial_C\D$ endowed with the Carath\'eodory topology, the result follows at once.

The following lemma is a straightforward computation from the very definition:

\begin{lemma}\label{Lem:hyper-semipiano}
Let $\beta\in (-\frac{\pi}{2},\frac{\pi}{2})$. 
\begin{enumerate}
\item Let  $0<\rho_0<\rho_1$ and let $\Gamma:=\{\rho e^{i\beta}: \rho_0\leq \rho\leq \rho_1\}$. Then, $\displaystyle{\ell_{\Ha}(\Gamma)=\frac{1}{2\cos \beta}\log\frac{\rho_1}{\rho_0}}$.
In particular, $\displaystyle{k_\Ha(\rho_0, \rho_1)=\frac{1}{2}\log\frac{\rho_1}{\rho_0}}$.
\item Let $\rho_0, \rho_1>0$. Then, $\displaystyle{k_{\Ha}(\rho_0,\rho_1e^{i\beta})-k_{\Ha}(\rho_0,\rho_1)\geq \frac{1}{2}\log\frac{1}{\cos \beta}.}$
\item Let $\rho_0>0$ and $\alpha\in (-\frac{\pi}{2},\frac{\pi}{2})$. Then, $(0,+\infty)\ni \rho\mapsto k_{\Ha}(\rho e^{i\alpha},\rho_0e^{i\beta})$ has a minimum at $\rho=\rho_0$, it is  increasing for $\rho>\rho_0$ and  decreasing for $\rho<\rho_0$.
\item Let $\theta_0, \theta_1\in (-\frac{\pi}{2},\frac{\pi}{2})$ and $\rho>0$. Then $k_{\Ha}(\rho e^{i\theta_0},\rho e^{i\theta_1})=k_{\Ha}(e^{i\theta_0},e^{i\theta_1})$. Moreover, $k_\Ha(1,e^{i\theta})=k_\Ha(1,e^{-i\theta})$ for all $\theta\in [0,\pi/2)$ and $[0, \pi/2)\ni \theta\mapsto k_\Ha(1,e^{i\theta})$ is strictly increasing.
\item Let $\beta_0,\beta_1\in (-\frac{\pi}{2},\frac{\pi}{2})$ and $0<\rho_0<\rho_1$. Then $\displaystyle{
k_{\Ha}(\rho_0e^{i\beta_0},\rho_1e^{i\beta_1})\geq k_{\Ha}(\rho_0,\rho_1)}$.
\item For all $\rho>0$  we have $\displaystyle{k_\Ha(\rho, \rho e^{i\beta})\leq \frac{1}{2}\log\frac{1}{\cos\beta}+\frac{1}{2}\log 2}$.
\end{enumerate}
\end{lemma}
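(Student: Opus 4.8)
The plan is to deduce each of the six statements by a direct computation from the two explicit formulas for $\Ha$ recorded above, namely $\kappa_\Ha(w;v)=|v|/(2\Re w)$ and $k_\Ha(w_1,w_2)=\frac{1}{2}\log\frac{1+|\tau|}{1-|\tau|}$ with $\tau=\frac{w_1-w_2}{w_1+\overline{w_2}}$, together with the already recalled facts that $(0,+\infty)$ is a geodesic of $\Ha$ and that $w\mapsto cw$ (for $c>0$) and $w\mapsto\overline{w}$ are, respectively, a holomorphic and an anti-holomorphic automorphism of $\Ha$, hence $k_\Ha$-isometries.

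For (1) I would parametrize $\Gamma$ by $\rho\mapsto\rho e^{i\beta}$, so that $|\gamma'|\equiv 1$ and $\Re\gamma=\rho\cos\beta$, whence $\ell_\Ha(\Gamma)=\int_{\rho_0}^{\rho_1}\frac{d\rho}{2\rho\cos\beta}=\frac{1}{2\cos\beta}\log\frac{\rho_1}{\rho_0}$; taking $\beta=0$, the segment lies on the geodesic $(0,+\infty)$, so $k_\Ha(\rho_0,\rho_1)$ is its hyperbolic length $\frac{1}{2}\log\frac{\rho_1}{\rho_0}$. For (2), a short rearrangement of the distance formula gives $k_\Ha(\rho_0,\rho_1 e^{i\beta})=\frac{1}{2}\log\frac{a+\sqrt{a^2-b^2}}{b}$ with $a=\rho_0^2+\rho_1^2$ and $b=2\rho_0\rho_1\cos\beta$; for $\beta=0$ one has $b=b_0:=2\rho_0\rho_1$ and $\sqrt{a^2-b_0^2}=\rho_1^2-\rho_0^2$, consistently with (1), and subtracting gives $k_\Ha(\rho_0,\rho_1 e^{i\beta})-k_\Ha(\rho_0,\rho_1)=\frac{1}{2}\log\left(\frac{1}{\cos\beta}\cdot\frac{a+\sqrt{a^2-b^2}}{a+\sqrt{a^2-b_0^2}}\right)\geq\frac{1}{2}\log\frac{1}{\cos\beta}$, since $b\leq b_0$ forces the last fraction to be $\geq 1$.

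Item (3) is the one step with a small algebraic surprise in it. Since $x\mapsto\operatorname{artanh}(\sqrt{x})$ is strictly increasing on $[0,1)$, it suffices to analyse the function $\rho\mapsto|\tau|^2=\frac{\rho^2-2\rho\rho_0\cos(\alpha-\beta)+\rho_0^2}{\rho^2+2\rho\rho_0\cos(\alpha+\beta)+\rho_0^2}$; setting $u=\rho/\rho_0$, $c_1=\cos(\alpha-\beta)$, $c_2=\cos(\alpha+\beta)$, this becomes $g(u)=\frac{u^2-2uc_1+1}{u^2+2uc_2+1}$, and a direct differentiation shows that the numerator of $g'(u)$ equals $2(c_1+c_2)(u^2-1)$. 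Since $c_1+c_2=2\cos\alpha\cos\beta>0$ for $\alpha,\beta\in(-\frac{\pi}{2},\frac{\pi}{2})$, one gets $g'<0$ on $(0,1)$, $g'(1)=0$ and $g'>0$ on $(1,+\infty)$, which is precisely the asserted behaviour (minimum at $\rho=\rho_0$, decreasing before, increasing after).

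For (4), the isometries $w\mapsto w/\rho$ and $w\mapsto\overline{w}$ at once yield $k_\Ha(\rho e^{i\theta_0},\rho e^{i\theta_1})=k_\Ha(e^{i\theta_0},e^{i\theta_1})$ and $k_\Ha(1,e^{i\theta})=k_\Ha(1,e^{-i\theta})$; since $\left|\frac{1-e^{i\theta}}{1+e^{-i\theta}}\right|=\tan(\theta/2)$ one has $k_\Ha(1,e^{i\theta})=\operatorname{artanh}(\tan(\theta/2))$, strictly increasing on $[0,\pi/2)$. For (5), the key observation is that any Lipschitz curve $\gamma$ in $\Ha$ joining $\rho_0 e^{i\beta_0}$ to $\rho_1 e^{i\beta_1}$ admits a polar representation $\gamma(t)=r(t)e^{i\theta(t)}$ (as $\Ha$ is simply connected and $0\notin\Ha$), and then $|\gamma'|\geq|r'|$ together with $0<\Re\gamma=r\cos\theta\leq r$ gives $\ell_\Ha(\gamma)=\int\frac{|\gamma'|}{2r\cos\theta}\,dt\geq\int\frac{|r'|}{2r}\,dt\geq\left|\frac{1}{2}\log\frac{r(1)}{r(0)}\right|=\frac{1}{2}\log\frac{\rho_1}{\rho_0}$; taking the infimum over $\gamma$ and using (1) yields $k_\Ha(\rho_0 e^{i\beta_0},\rho_1 e^{i\beta_1})\geq k_\Ha(\rho_0,\rho_1)$. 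Finally (6) follows from (4): $k_\Ha(\rho,\rho e^{i\beta})=k_\Ha(1,e^{i\beta})=\operatorname{artanh}(\tan(|\beta|/2))=\frac{1}{2}\log\frac{1+\sin|\beta|}{\cos\beta}\leq\frac{1}{2}\log\frac{2}{\cos\beta}$, since $\sin|\beta|\leq 1$. None of these steps is deep --- this is ``a straightforward computation'', as announced --- the only points requiring a moment's thought being the factorization in (3) and the radial length bound in (5).
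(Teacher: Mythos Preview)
Your proof is correct and is precisely what the paper intends: it states only that ``the following lemma is a straightforward computation from the very definition'' and gives no argument, so your explicit verification of each of (1)--(6) from the formulas for $\kappa_\Ha$ and $k_\Ha$ is exactly the omitted routine work. The factorization $2(c_1+c_2)(u^2-1)$ in (3), the radial length estimate $|\gamma'|/(2\Re\gamma)\geq |r'|/(2r)$ in (5), and the identity $\operatorname{artanh}(\tan(|\beta|/2))=\tfrac{1}{2}\log\frac{1+\sin|\beta|}{\cos\beta}$ in (6) are all correct and sharp enough for the stated bounds.
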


\section{Hyperbolic projections, tangential and orthogonal speeds of curves in the disc}

In what follows, for not burdening the notation, we will consider geodesics parameterized by (hyperbolic) arc length, but, as it will be clear, this is not relevant, and any parametrization of  geodesics would work as well.

\begin{definition}
Let $\Omega\subsetneq \C$ be a simply connected domain. Let $\gamma:\R\to \Omega$ be a geodesic parameterized by arc length. Let $z\in \Omega$. The {\sl hyperbolic projection $\pi_\gamma(z)\in \gamma(\R)$ of $z$ onto $\gamma$} is the closest point (in the hyperbolic distance) of $\gamma$ to $z$, namely,
\[
k_\Omega(\pi_\gamma(z), z)=\min_{t\in \R}k_\Omega(\gamma(t), z).
\]
\end{definition}

Using conformal invariance, one can easily prove the following:

\begin{proposition}\label{Prop:hyperbolic-projection-geo}
Let $\Omega\subsetneq \C$ be a simply connected domain. Let $\gamma:\R\to \Omega$ be a geodesic in $\Omega$ parameterized by arc length and let $z\in \Omega$. Then $\pi_\gamma(z)$ is the point of intersection of $\gamma$ with the geodesic $\tilde\gamma$ containing $z$ and intersecting $\gamma$ orthogonally (in the Euclidean sense).
\end{proposition}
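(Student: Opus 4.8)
The plan is to reduce, by conformal invariance, to the model case $\Omega=\Ha$ with $\gamma(\R)=(0,+\infty)$, and there to read off everything from Lemma~\ref{Lem:hyper-semipiano}(3) together with the elementary Euclidean geometry of geodesics in $\Ha$.

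First I would normalize. Fix a Riemann map $g:\Omega\to\D$; then $g\circ\gamma$ is a complete geodesic of $\D$, hence a diameter or a circular arc meeting $\partial\D$ orthogonally, and in either case it has two distinct endpoints on $\partial\D$. Choosing $\psi\in\Aut$ sending these endpoints to $-1$ and $1$ and composing with the Cayley transform $z\mapsto(1+z)/(1-z)$ produces a biholomorphism $F:\Omega\to\Ha$ with $F(\gamma(\R))=(0,+\infty)$. Since $F$ is a hyperbolic isometry, $k_\Omega(\pi_\gamma(z),z)=\min_{t}k_\Ha(F(\gamma(t)),F(z))$, so $F(\pi_\gamma(z))$ is exactly the hyperbolic projection of $w_0:=F(z)$ onto $(0,+\infty)$; and since $F$ is conformal it preserves Euclidean angles, so it carries the geodesic through $z$ meeting $\gamma$ orthogonally onto the geodesic through $w_0$ meeting $(0,+\infty)$ orthogonally. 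Thus it suffices to prove the statement when $\Omega=\Ha$ and $\gamma(\R)=(0,+\infty)$.

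In this model, write $w_0=\rho_0e^{i\beta}$ with $\rho_0>0$ and $\beta\in(-\tfrac\pi2,\tfrac\pi2)$. By Lemma~\ref{Lem:hyper-semipiano}(3) with $\alpha=0$, the function $(0,+\infty)\ni\rho\mapsto k_\Ha(\rho,\rho_0e^{i\beta})$ has a strict minimum at $\rho=\rho_0$, so the hyperbolic projection of $w_0$ onto $(0,+\infty)$ is the point $\rho_0=|w_0|$. Next I would identify the geodesics of $\Ha$ meeting $(0,+\infty)$ orthogonally: the geodesics of $\Ha$ are the horizontal lines and the semicircular arcs centered on the imaginary axis; a horizontal line meeting $(0,+\infty)$ is $(0,+\infty)$ itself, hence not orthogonal to it, and a semicircular arc centered at $ic$ is orthogonal to the real axis only when its radius at the intersection point is horizontal, forcing $c=0$. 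So the geodesics meeting $(0,+\infty)$ orthogonally are precisely the arcs $\{\rho e^{i\theta}:|\theta|<\tfrac\pi2\}$, $\rho>0$, the one of radius $\rho$ meeting $(0,+\infty)$ at $\rho$. Exactly one of them contains $w_0$, namely the one of radius $\rho_0=|w_0|$, and it meets $(0,+\infty)$ at $\rho_0$, which is exactly the hyperbolic projection computed above. This proves the proposition; the degenerate case $\beta=0$ (i.e.\ $z\in\gamma(\R)$, $\pi_\gamma(z)=z$) is included.

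The argument is largely routine; the only points needing care are that ``intersecting orthogonally in the Euclidean sense'' is a conformally invariant condition — which is what legitimizes the passage to the model — and that $\gamma$ is a complete two-sided geodesic, so that $g\circ\gamma$ genuinely has two boundary endpoints available for the normalization.
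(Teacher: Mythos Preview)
Your proof is correct and follows precisely the approach the paper indicates: the paper merely writes ``using conformal invariance, one can easily prove'' and then, right after the proposition, records the consequence $\pi_\gamma(\rho e^{i\theta})=\rho$ in $\Ha$ via Lemma~\ref{Lem:hyper-semipiano}(3), which is exactly the computation you carry out. You have simply filled in the details the paper omits --- the normalization to $\Ha$ with $\gamma(\R)=(0,+\infty)$, the identification of the orthogonal family as the origin-centered arcs, and the explicit remark that conformality preserves Euclidean orthogonality so the reduction is legitimate.
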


In particular, by Lemma \ref{Lem:hyper-semipiano}(3), if $\rho e^{i\theta}\in \Ha$, $\rho>0$ and $\theta\in (-\pi/2,\pi/2)$ and $\gamma$ denotes the geodesic given by $\gamma(r)=r$, $r>0$, then
\[
\pi_\gamma(\rho e^{i\theta})=\rho.
\]

Although orthogonal projections onto geodesics are not holomorphic maps, they do not increase the hyperbolic distance:

\begin{proposition}\label{Prop:projection-decrease}
Let  $\Omega\subsetneq \C$ be a simply connected domain, $\gamma:\R \to \Omega$  a geodesic parameterized by arc length. Then for every $z,w \in \Omega$, we have
\[
k_\Omega(\pi_\gamma(z), \pi_\gamma(w))\leq k_\Omega(z,w).
\]
\end{proposition}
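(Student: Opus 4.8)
By conformal invariance of the hyperbolic distance and of hyperbolic projections, we may assume $\Omega = \Ha$ and, after composing with a suitable automorphism of $\Ha$, that $\gamma$ is the geodesic $\gamma(r) = e^r$, $r \in \R$, so that $\gamma(\R) = (0,+\infty) \subset \Ha$. By Proposition \ref{Prop:hyperbolic-projection-geo} (or directly by Lemma \ref{Lem:hyper-semipiano}(3)), for a point $w = \rho e^{i\theta} \in \Ha$ with $\rho > 0$ and $\theta \in (-\pi/2, \pi/2)$ we have $\pi_\gamma(w) = \rho = |w|$. Thus the statement reduces to the elementary-looking inequality
\[
k_\Ha(|z|, |w|) \leq k_\Ha(z, w), \qquad z, w \in \Ha,
\]
which is nothing but Lemma \ref{Lem:hyper-semipiano}(5) once we write $z = \rho_0 e^{i\beta_0}$ and $w = \rho_1 e^{i\beta_1}$ (if $|z| = |w|$ the left side is $0$ and there is nothing to prove; otherwise relabel so that $\rho_0 < \rho_1$). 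This already finishes the proof, so the main content is really the reduction, and the work is to make sure the normalization step is legitimate.

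The first step I would carry out carefully is the reduction to the half-plane. Fix a Riemann map; since biholomorphisms are hyperbolic isometries and carry geodesics to geodesics, and since they commute with the operation of taking the hyperbolic projection onto a geodesic (the projection is characterized purely metrically, as the nearest point), it suffices to prove the inequality for one convenient model. I would then transport $\gamma$ to the standard geodesic $(0,+\infty)$ in $\Ha$: given any geodesic in $\Ha$, there is an automorphism of $\Ha$ mapping it onto the positive real axis, and this automorphism again commutes with projection and preserves $k_\Ha$.

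The second step is identifying the projection explicitly in the model. This is where Proposition \ref{Prop:hyperbolic-projection-geo} is invoked: the geodesics orthogonal (in the Euclidean sense) to the positive real axis are precisely the arcs of circles centered at the origin, i.e. the curves $|w| = \mathrm{const}$, so the geodesic through $w$ meeting $(0,+\infty)$ orthogonally hits it at $|w|$. Alternatively, one can bypass the proposition and use Lemma \ref{Lem:hyper-semipiano}(3) directly: it says $\rho \mapsto k_\Ha(\rho e^{i\alpha}, \rho_0 e^{i\beta})$ attains its minimum at $\rho = \rho_0$, which is exactly the assertion $\pi_\gamma(\rho_0 e^{i\beta}) = \rho_0$. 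I would probably cite the proposition for cleanliness and mention the lemma as an aside.

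The only place where one might expect difficulty — and it is not really difficult — is verifying that taking projections commutes with isometries and that Lemma \ref{Lem:hyper-semipiano}(5) is exactly the inequality needed; but (5) is stated in precisely the form $k_\Ha(\rho_0 e^{i\beta_0}, \rho_1 e^{i\beta_1}) \geq k_\Ha(\rho_0, \rho_1)$ for $0 < \rho_0 < \rho_1$, which, via $\pi_\gamma(z) = |z|$, reads $k_\Ha(z,w) \geq k_\Ha(\pi_\gamma(z), \pi_\gamma(w))$. So there is no real obstacle: the substance of the proposition is entirely contained in the preparatory Lemma \ref{Lem:hyper-semipiano}, and the proof is a matter of organizing the conformal normalization correctly.
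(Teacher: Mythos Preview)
Your proof is correct and follows essentially the same approach as the paper: reduce by conformal invariance to $\Omega=\Ha$ with $\gamma$ the positive real axis, identify $\pi_\gamma(\rho e^{i\beta})=\rho$ via Lemma~\ref{Lem:hyper-semipiano}(3), and conclude from Lemma~\ref{Lem:hyper-semipiano}(5). The paper's version is more terse and does not spell out the $|z|=|w|$ case or the commutation of projection with isometries, but the argument is the same.
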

\begin{proof}
Since the statement is invariant under isometries for the hyperbolic distance, using a univalent map, we can assume $\Omega=\Ha$ and the image of $\gamma$ is $(0,+\infty)$. We can write $z=\rho_0 e^{i\beta_0}$ with $\rho_0>0$ and $\beta_0\in (-\pi/2,\pi/2)$ and $w=\rho_1 e^{i\beta_1}$ with $\rho_1>0$ and $\beta_1\in (-\pi/2,\pi/2)$. By Lemma \ref{Lem:hyper-semipiano}(3), $\pi_\gamma(z)=\pi_\gamma(\rho_0 e^{i\beta_0})=\rho_0$ and $\pi_\gamma(w)=\pi_\gamma(\rho_1 e^{i\beta_1})=\rho_1$. Hence the result follows immediately from Lemma \ref{Lem:hyper-semipiano}(5). 
\end{proof}

Let $P, Q\in \R^2$ two distinct points, and $R$ any line through $P$---note that a line is a geodesic for the Euclidean metric. Let $\pi_R(Q)$ denote the (Euclidean) orthogonal projection of $Q$ onto $R$. By Pythagoras' Theorem, $|P-\pi_R(Q)|^2+|Q-\pi_R(Q)|^2=|P-Q|^2$. The next result tells that, in hyperbolic geometry, a Pythagoras' Theorem is true up to a universal constant without squaring the distances:

\begin{proposition}[Pytaghoras' Theorem in hyperbolic geometry] \label{Prop:distance-como-suma}
Let  $\Omega\subsetneq \C$ be a simply connected domain, $\gamma:\R \to \Omega$  a geodesic parameterized by arc length, $x_0\in \gamma$ and  $z\in \Omega$. Then
\[
k_\Omega(x_0, \pi_\gamma(z))+k_\Omega(z, \gamma)-\frac{1}{2}\log 2 \leq k_\Omega(x_0, z)\leq k_\Omega(x_0, \pi_\gamma(z))+k_\Omega(z, \gamma),
\]
where $k_\Omega(z, \gamma):=\inf_{t\in \R}k_\Omega(z, \gamma(t))=k_\Omega(z,\pi_\gamma(z))$.
\end{proposition}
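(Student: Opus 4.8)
The plan is to reduce everything to an explicit computation in the right half-plane $\Ha$, exactly as in the proof of Proposition~\ref{Prop:projection-decrease}. Since all three quantities $k_\Omega(x_0,\pi_\gamma(z))$, $k_\Omega(z,\gamma)$ and $k_\Omega(x_0,z)$ are invariant under the hyperbolic isometry given by a Riemann map, I may assume $\Omega=\Ha$ and that $\gamma$ is the positive real axis $(0,+\infty)$. Then $x_0=\rho_0$ for some $\rho_0>0$, and by Lemma~\ref{Lem:hyper-semipiano}(3) and the remark following Proposition~\ref{Prop:hyperbolic-projection-geo}, writing $z=\rho e^{i\theta}$ with $\rho>0$ and $\theta\in(-\pi/2,\pi/2)$, we have $\pi_\gamma(z)=\rho$, so that $k_\Ha(z,\gamma)=k_\Ha(\rho e^{i\theta},\rho)=k_\Ha(1,e^{i\theta})$ by Lemma~\ref{Lem:hyper-semipiano}(4), and $k_\Ha(x_0,\pi_\gamma(z))=k_\Ha(\rho_0,\rho)=\tfrac12\log(\rho/\rho_0)$ (up to sign) by Lemma~\ref{Lem:hyper-semipiano}(1).

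For the \emph{upper} bound I would use the triangle inequality together with the fact that $(0,+\infty)$ is a geodesic:
\[
k_\Ha(x_0,z)\leq k_\Ha(x_0,\pi_\gamma(z))+k_\Ha(\pi_\gamma(z),z)=k_\Ha(x_0,\pi_\gamma(z))+k_\Ha(z,\gamma),
\]
which is exactly the right-hand inequality with no constant lost. For the \emph{lower} bound, the key point is that the minimum over $\gamma$ is attained precisely at $\pi_\gamma(z)=\rho$, and I want to show $k_\Ha(\rho_0,\rho e^{i\theta})\geq k_\Ha(\rho_0,\rho)+k_\Ha(\rho,\rho e^{i\theta})-\tfrac12\log 2$. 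Using the explicit formula $k_\Ha(w_1,w_2)=\tfrac12\log\frac{1+|(w_1-w_2)/(w_1+\overline{w_2})|}{1-|(w_1-w_2)/(w_1+\overline{w_2})|}$, this becomes an elementary inequality between the three pseudo-hyperbolic quantities $\bigl|\tfrac{\rho_0-\rho e^{i\theta}}{\rho_0+\rho e^{-i\theta}}\bigr|$, $\bigl|\tfrac{\rho_0-\rho}{\rho_0+\rho}\bigr|$ and $|\tan(\theta/2)|$ (the last being $k_\Ha(\rho,\rho e^{i\theta})$ after simplification); alternatively, and perhaps more cleanly, I would invoke Lemma~\ref{Lem:hyper-semipiano}(6), which gives $k_\Ha(\rho,\rho e^{i\theta})\leq \tfrac12\log\frac{1}{\cos\theta}+\tfrac12\log 2$, together with Lemma~\ref{Lem:hyper-semipiano}(5) in the form $k_\Ha(\rho_0 e^{i0},\rho e^{i\theta})\geq k_\Ha(\rho_0,\rho)$ sharpened by Lemma~\ref{Lem:hyper-semipiano}(2): the inequality of part~(2) says $k_\Ha(\rho_0,\rho e^{i\theta})\geq k_\Ha(\rho_0,\rho)+\tfrac12\log\frac{1}{\cos\theta}$, and combining this with part~(6) yields $k_\Ha(\rho_0,\rho e^{i\theta})\geq k_\Ha(\rho_0,\rho)+k_\Ha(\rho,\rho e^{i\theta})-\tfrac12\log 2$, which is precisely the left-hand inequality.

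I expect the main obstacle to be bookkeeping rather than anything deep: one must handle the case $\rho<\rho_0$ versus $\rho>\rho_0$ (which only affects the sign inside a logarithm and is absorbed by Lemma~\ref{Lem:hyper-semipiano}(1) stating the distance, not a signed quantity), and one must make sure that the constant $\tfrac12\log 2$ coming from Lemma~\ref{Lem:hyper-semipiano}(6) is the one that propagates, with no further loss when chaining parts~(2) and~(6). The degenerate situation $z\in\gamma$ (i.e. $\theta=0$) is trivial since then $\pi_\gamma(z)=z$ and both inequalities reduce to $k_\Ha(x_0,z)\le k_\Ha(x_0,z)\le k_\Ha(x_0,z)$, noting $\tfrac12\log 2>0$. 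Thus the whole proof is: normalize to $\Ha$ with $\gamma=(0,+\infty)$; identify $\pi_\gamma(z)=\rho$; get the upper bound from the triangle inequality; get the lower bound by adding the estimates of Lemma~\ref{Lem:hyper-semipiano}(2) and~(6).
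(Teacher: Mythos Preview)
Your proposal is correct and follows essentially the same route as the paper: normalize to $\Ha$ with $\gamma=(0,+\infty)$, identify $\pi_\gamma(\rho e^{i\theta})=\rho$, get the upper bound from the triangle inequality, and obtain the lower bound by combining Lemma~\ref{Lem:hyper-semipiano}(2) with Lemma~\ref{Lem:hyper-semipiano}(6). The only cosmetic difference is that the paper further normalizes $x_0=1$ (via the automorphism $w\mapsto w/\rho_0$ of $\Ha$), which makes the bookkeeping you worry about disappear; your concern about $\rho<\rho_0$ versus $\rho>\rho_0$ is in any case unfounded, since Lemma~\ref{Lem:hyper-semipiano}(2) is stated for arbitrary $\rho_0,\rho_1>0$.
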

\begin{proof}
Since the statement is invariant under isometries for the hyperbolic distance, using a univalent map, we can transfer our considerations to $\Ha$, and we can assume that $\gamma(\R)=(0,+\infty)$ and $x_0=1$.

Let $z\in \Ha$, and write $z=\rho e^{i\beta}$ with $\rho>0$ and $\beta\in (-\pi/2,\pi/2)$. By Lemma \ref{Lem:hyper-semipiano}(3), $\pi_\gamma(\rho e^{i\beta})=\rho$. Hence, by the triangle inequality,
\[
k_\Ha(1,\rho e^{i\beta})\leq k_\Ha(1, \rho)+k_\Ha(\rho, \rho e^{i\beta})=k_\Ha(1, \pi_\gamma(\rho e^{i\beta}))+k_\Ha(\gamma, \rho e^{i\beta}).
\]
On the other hand, by Lemma \ref{Lem:hyper-semipiano}(2),
\begin{equation*}
k_\Ha(1,\rho e^{i\beta})\geq k_\Ha(1,\rho)+\frac{1}{2}\log \frac{1}{\cos \beta}.
\end{equation*}
The previous equation, together with Lemma \ref{Lem:hyper-semipiano}(6), gives
\begin{equation*}
\begin{split}
k_\Ha(1,\rho e^{i\beta})&\geq k_\Ha(1,\rho)+\frac{1}{2}\log \frac{1}{\cos \beta}\geq k_\Ha(1,\rho)+k_\Ha(\rho, \rho e^{i\beta})-\frac{1}{2}\log 2\\& =k_\Ha(1, \pi_\gamma(\rho e^{i\beta}))+k_\Ha(\gamma, \rho e^{i\beta})-\frac{1}{2}\log 2,
\end{split}
\end{equation*}
and we are done.
\end{proof}

The previous proposition allows to make sense to the following definition and the subsequent remarks.

\begin{definition}\label{Def:speed-tan-ort}
Let $\Omega\subsetneq \C$ be a simply connected domain and let $z_0\in \Omega$.
Let $\eta:[0,+\infty)\to \Omega$ be a continuous curve such that $\eta(t)$ converges in the Carath\'eodory topology of $\Omega$ to a prime end $\underline{x}\in \partial_C\Omega$ as $t\to+\infty$. Let $\gamma:(-\infty,+\infty)\to \Omega$ be the geodesic of $\Omega$ parameterized by arc length such that $\gamma(0)=z_0$ and $\gamma(t)\to \underline{x}$ in the Carath\'eodory topology of $\Omega$  as $t\to+\infty$. The {\sl orthogonal speed} of $\eta$ is
\[
v_{\Omega,z_0}^o(\eta;t):=k_\Omega(z_0, \pi_{\gamma}(\eta(t))).
\]
The {\sl tangential speed} $v_{\Omega,z_0}^T(\eta;t)$  of $\eta$ is
\[
v_{\Omega,z_0}^T(\eta;t):=k_\Omega(\gamma, \eta(t)).
\]
\end{definition}

\begin{remark}\label{Rem:speed-inv-conforme}
Let $\Omega$, $z_0$, $\underline{x}$, $\gamma$ and $\eta$ be as in Definition \ref{Def:speed-tan-ort}.
\begin{enumerate}
\item The orthogonal speed and the tangential speed of a curve do not depend on the parameterization of the geodesic $\gamma$. Therefore,  the definition of orthogonal speed and tangential speed depend only on $\Omega, z_0$ and $\underline{x}$.
\item If $\Omega, \Omega'\subsetneq \C$ are simply connected domains, $z_0\in \Omega$, $z_0'\in \Omega'$ and $f:\Omega\to \Omega'$ is a biholomorphism such that $f(z_0)=z_0'$ then $v_{\Omega,z_0}^o(\eta;t)=v_{\Omega',z'_0}^o(f\circ \eta; t)$ and $v_{\Omega,z_0}^T(\eta;t)=v_{\Omega',z'_0}^T(f\circ \eta; t)$ for all $t\geq 0$. This follows immediately since $f$ is an isometry for the hyperbolic distances of $\Omega$ and $\Omega'$.
\end{enumerate}
\end{remark}

The actual orthogonal speed and tangential speed of a curve depend on the base point chosen, but, asymptotically they do not:

\begin{lemma}\label{Lem:speed-tan-ort-base}
Let $\Omega\subsetneq \C$ be a simply connected domain and let $z_0, z_1\in \Omega$. Then for every $\underline{x}\in \partial_C\Omega$ and for every continuous curve $\eta:[0,+\infty)\to \Omega$ converging to $\underline{x}$ in the Carath\'eodory topology of $\Omega$, we have
\begin{enumerate}
\item $\lim_{t\to+\infty} v_{\Omega,z_0}^o(\eta;t)=+\infty$,
\item $\lim_{t\to+\infty} |v_{\Omega,z_0}^T(\eta;t)-v_{\Omega,z_1}^T(\eta;t)|=0$,
\item $\limsup_{t\to+\infty}|v_{\Omega,z_0}^o(\eta;t)-v_{\Omega,z_1}^o(\eta;t)|\leq k_\Omega(z_0,z_1)$.
\end{enumerate}
\end{lemma}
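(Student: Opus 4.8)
The plan is to establish the three claims in order, using the Pythagoras-type estimate of Proposition \ref{Prop:distance-como-suma} together with the fact that hyperbolic projection decreases the hyperbolic distance (Proposition \ref{Prop:projection-decrease}).

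For (2), I would start from the observation that $v_{\Omega,z_0}^T(\eta;t)=k_\Omega(\gamma,\eta(t))=k_\Omega(\pi_\gamma(\eta(t)),\eta(t))$ depends on $\eta(t)$ only through its distance to the single geodesic $\gamma$, which is the same geodesic regardless of the base point, since $\gamma$ is determined by $\underline{x}$ and ... wait — here $\gamma$ does depend on $z_0$ through $\gamma(0)=z_0$. But two geodesics landing at the same prime end $\underline{x}$ have, as unparametrized curves, the same trace only asymptotically. The correct route: let $\gamma_0,\gamma_1$ be the geodesics with $\gamma_i(0)=z_i$ landing at $\underline{x}$. Transfer to $\Ha$ so that $\gamma_0(\R)=(0,+\infty)$; then $z_1\in\Ha$ and $\gamma_1$ is the circular arc (or line) through $z_1$ that also lands at $\infty$, which in $\Ha$ means $\gamma_1$ is asymptotic to the positive real axis. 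One then shows $k_\Ha(\gamma_0,w)-k_\Ha(\gamma_1,w)\to 0$ as $w\to\infty$ along any curve converging to the prime end $\infty$: by the triangle inequality this difference is bounded by $\sup_{w'\in\gamma_1}k_\Ha(\gamma_0,w')$-type quantities restricted near $\infty$, and since the two geodesics are asymptotic, the hyperbolic distance between the ``tails'' tends to $0$. Concretely, parametrize both by arc length so they land at $\infty$; the hyperbolic distance between $\gamma_0(s)$ and $\gamma_1(s)$ tends to $0$ (geodesics with the same endpoint are asymptotic in hyperbolic geometry), and then a projection argument closes the gap.

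For (3), the cleanest approach is: transfer everything to $\Ha$ with $\gamma(\R)=(0,+\infty)$ for the base point $z_0$; writing $z_1=\rho_1 e^{i\beta_1}$, the geodesic $\gamma_1$ from $z_1$ to $\infty$ is asymptotic to $(0,+\infty)$. For $z_0$ we have $v_{\Omega,z_0}^o(\eta;t)=k_\Ha(z_0,\pi_{\gamma_0}(\eta(t)))$ and for $z_1$, $v_{\Omega,z_1}^o(\eta;t)=k_\Ha(z_1,\pi_{\gamma_1}(\eta(t)))$. Using Remark \ref{Rem:speed-inv-conforme}(2) one may instead compute $v_{\Omega,z_1}^o$ after applying an isometry sending $z_1\mapsto z_0$, which sends $\gamma_1\mapsto\gamma_0$; thus $v_{\Omega,z_1}^o(\eta;t)=k_\Ha(z_0,\pi_{\gamma_0}(g(\eta(t))))$ where $g$ is the isometry with $g(z_1)=z_0$. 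Then
\[
|v_{\Omega,z_0}^o(\eta;t)-v_{\Omega,z_1}^o(\eta;t)|=|k_\Ha(z_0,\pi_{\gamma_0}(\eta(t)))-k_\Ha(z_0,\pi_{\gamma_0}(g(\eta(t))))|\leq k_\Ha(\pi_{\gamma_0}(\eta(t)),\pi_{\gamma_0}(g(\eta(t))))
\]
by the triangle inequality, and by Proposition \ref{Prop:projection-decrease} this is at most $k_\Ha(\eta(t),g(\eta(t)))=k_\Ha(\eta(t),\eta(t))$... no: $g(\eta(t))$ is not $\eta(t)$. Instead $k_\Ha(\pi_{\gamma_0}(\eta(t)),\pi_{\gamma_0}(g(\eta(t))))\le k_\Ha(\eta(t),g(\eta(t)))$, and since $g$ is an isometry fixing the hyperbolic distance, $k_\Ha(\eta(t),g(\eta(t)))$ need not be small. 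The right fix: bound $|k_\Ha(z_0,\pi_{\gamma_0}(w))-k_\Ha(z_1,\pi_{\gamma_1}(w))|$ directly. By the triangle inequality this is at most $k_\Ha(z_0,z_1)+|k_\Ha(z_1,\pi_{\gamma_0}(w))-k_\Ha(z_1,\pi_{\gamma_1}(w))|\le k_\Ha(z_0,z_1)+k_\Ha(\pi_{\gamma_0}(w),\pi_{\gamma_1}(w))$, and since $\gamma_0,\gamma_1$ are asymptotic at $\infty$ and $w\to\infty$, one shows $k_\Ha(\pi_{\gamma_0}(w),\pi_{\gamma_1}(w))\to 0$, giving the $\limsup\le k_\Omega(z_0,z_1)$ bound.

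Finally, (1) follows from Proposition \ref{Prop:distance-como-suma}: $v_{\Omega,z_0}^o(\eta;t)=k_\Omega(z_0,\pi_\gamma(\eta(t)))\geq k_\Omega(z_0,\eta(t))-v_{\Omega,z_0}^T(\eta;t)$. Since $\eta(t)$ converges to the prime end $\underline{x}\in\partial_C\Omega$, one has $k_\Omega(z_0,\eta(t))\to+\infty$; so it remains to see that $v_{\Omega,z_0}^T(\eta;t)$ does not grow as fast. In $\Ha$ with $\gamma(\R)=(0,+\infty)$, writing $\eta(t)=\rho(t)e^{i\beta(t)}$, Lemma \ref{Lem:hyper-semipiano}(6) gives $v^T=k_\Ha(\rho(t),\rho(t)e^{i\beta(t)})\le\frac12\log\frac{1}{\cos\beta(t)}+\frac12\log 2$ while $v^o=k_\Ha(1,\rho(t))=\frac12|\log\rho(t)|$ and $k_\Ha(1,\eta(t))\to+\infty$; combining with Proposition \ref{Prop:distance-como-suma}, $2v^o\ge v^o+v^T-\frac12\log 2-2\log 2\ge k_\Ha(1,\eta(t))-\text{const}$ after using $v^T\le v^o+C$ — but that last inequality is the semigroup inequality not available here. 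Instead, argue directly: if $v^o(\eta;t)=\frac12|\log\rho(t)|$ stayed bounded along a sequence $t_n\to\infty$, then $\rho(t_n)$ would stay in a compact subset of $(0,\infty)$, so $\eta(t_n)=\rho(t_n)e^{i\beta(t_n)}$ would stay in a compact subset of $\Ha$ (as $|\beta|<\pi/2$ forces, together with bounded $\rho$, that $\eta(t_n)$ stays away from $0$ and $\infty$), contradicting $\eta(t_n)\to\underline{x}\in\partial_C\Ha$. Hence $v_{\Omega,z_0}^o(\eta;t)\to+\infty$. I expect the main obstacle to be the careful treatment in (2) and (3) of the asymptotic behaviour of the projections $\pi_{\gamma_0},\pi_{\gamma_1}$ onto two distinct but co-terminal geodesics, i.e., quantifying that geodesics landing at the same prime end are hyperbolically asymptotic; once that is in hand, (1) is a short deduction.
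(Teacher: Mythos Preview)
Your overall route---reduce to $\Ha$ with $\gamma_0=(0,+\infty)$ and show that $k_\Ha(\pi_{\gamma_0}(\eta(t)),\pi_{\gamma_1}(\eta(t)))\to 0$, then feed this into triangle inequalities for (2) and (3)---is exactly the paper's strategy. What you are missing is the one concrete observation that turns your ``asymptotic geodesics'' hand-waving into a two-line computation: in $\Ha$, the geodesic through $z_1=x+iy$ landing at the prime end $\infty$ is simply the \emph{horizontal line} $\gamma_1(r)=r+iy$, and (via the translation $z\mapsto z-iy$) the projection is $\pi_{\gamma_1}(w)=|w-iy|+iy$. With $\pi_{\gamma_0}(w)=|w|$ already in hand from Lemma~\ref{Lem:hyper-semipiano}(3), the quantity you need is just $k_\Ha\big(|\eta(t)|,\ |\eta(t)-iy|+iy\big)$, which visibly tends to $0$ as $|\eta(t)|\to\infty$. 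This replaces your proposed abstract appeal to ``geodesics with a common endpoint are hyperbolically asymptotic plus a projection argument'', which as stated is not a proof: knowing $k_\Ha(\gamma_0(s),\gamma_1(s))\to 0$ does not by itself control $k_\Ha(\pi_{\gamma_0}(w),\pi_{\gamma_1}(w))$ without further work, since the two footpoints could a priori sit at very different parameters.

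For (1), your direct argument is fine in spirit but contains a misstatement: bounded $\rho(t_n)$ and $|\beta(t_n)|<\pi/2$ do \emph{not} force $\eta(t_n)$ to lie in a compact subset of $\Ha$ (take $\beta(t_n)\to\pi/2$). What you actually need---and what you implicitly use---is only that $|\eta(t_n)|=\rho(t_n)$ stays bounded, which already contradicts convergence to the prime end $\infty$. The paper argues (1) even more directly: once $\pi_{\gamma_0}(\eta(t))=|\eta(t)|$ is known, $v^o=k_\Ha(1,|\eta(t)|)\to+\infty$ is immediate from $|\eta(t)|\to\infty$.
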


\begin{proof}
By Remark \ref{Rem:speed-inv-conforme}(2), up to composing with a biholomorphism from $\Ha$ to $\Omega$, we can assume $\Omega=\Ha$, $z_0=1$ and $\underline{x}$ is the prime end of $\Ha$ which corresponds to ``$\infty$'', namely, the prime end defined by the null chain $\{(n+1)e^{i\theta}: |\theta|<\pi/2\}_{n\in \N}$.  Hence, $\lim_{t\to+\infty}|\eta(t)|=+\infty$. Moreover, the geodesic in $\Ha$ which joins $1$ to $\underline{x}$ is $\gamma_0(r):=r$, $r\in (0,+\infty)$. While, the geodesic in $\Ha$ which joins $z_1:=x+iy$ to $\underline{x}$ is $\gamma_1(r):=r+iy$, $r\in (0,+\infty)$.

From Lemma \ref{Lem:hyper-semipiano}(3), we have $\pi_{\gamma_0}(\eta(t))=|\eta(t)|$. This shows in particular that
\[
v_{\Ha,1}^o(\eta;t)=k_\Ha(1, \pi_{\gamma_0}(\eta(t)))=k_\Ha(1, |\eta(t)|)\to +\infty,
\]
as $t\to +\infty$, and (1) follows.

On the other hand, using the automorphism $z\mapsto z-iy$ which maps $\gamma_0$ onto $\gamma_1$ and taking into account that it is an isometry for $k_\Ha$, we see that $\pi_{\gamma_1}(\eta(t))=|\eta(t)-iy|+iy$.

Therefore,
\begin{equation*}
\begin{split}
|v_{\Ha,1}^T(\eta;t)-v_{\Ha,x+iy}^T(\eta;t)|&=|k_\Ha(\eta(t), \pi_{\gamma_0}(\eta(t)))-k_\Ha(\eta(t), \pi_{\gamma_1}(\eta(t)))|\\&\leq k_\Ha(\pi_{\gamma_0}(\eta(t)), \pi_{\gamma_1}(\eta(t)))=k_\Ha(|\eta(t)|, |\eta(t)-iy|+iy).
\end{split}
\end{equation*}
Taking into account that $\lim_{t\to+\infty}|\eta(t)|=+\infty$, a direct computation shows that
\begin{equation}\label{Eq:zero-t-ort-geo}
\lim_{t\to+\infty}k_\Ha(|\eta(t)|, |\eta(t)-iy|+iy)=0,
\end{equation}
 and hence (2) follows.

Now, using the triangle inequality,
\begin{equation*}
\begin{split}
|v_{\Ha,1}^o(\eta;t)-v_{\Ha,x+iy}^o(\eta;t)|&=|k_\Ha(1,\pi_{\gamma_0}(\eta(t)))-k_\Ha(x+iy, \pi_{\gamma_1}(\eta(t)))|\\&=|k_\Ha(1,\pi_{\gamma_0}(\eta(t)))-k_\Ha(x+iy,\pi_{\gamma_0}(\eta(t)))\\&+k_\Ha(x+iy,\pi_{\gamma_0}(\eta(t)))-k_\Ha(x+iy, \pi_{\gamma_1}(\eta(t)))|\\&\leq k_\Ha(1,x+iy)+k_\Ha(\pi_{\gamma_0}(\eta(t)),\pi_{\gamma_1}(\eta(t)))\\&= k_\Ha(1,x+iy)+k_\Ha(|\eta(t)|, |\eta(t)-iy|+iy),
\end{split}
\end{equation*}
and thus (3) follows from \eqref{Eq:zero-t-ort-geo}.
\end{proof}

The reason for the name ``tangential speed'' follows from the following property:

\begin{proposition}\label{Prop:expre-speed-in-disc}
Let $\eta:[0,+\infty)\to \D$ be a continuous curve converging to a point $\sigma\in \partial \D$. Let
\[
t_0:=\inf\{s\geq 0: \Re(\overline{\sigma}\eta(t))\geq 0 \ \forall t\in [s,+\infty)\}.
\]
Then $t_0\in [0,+\infty)$ and  for all $t\geq t_0$,
\begin{equation*}
\begin{split}
&\left|\omega(0,\eta(t))-\frac{1}{2}\log\frac{1}{1-|\eta(t)|}\right|\leq \frac{1}{2}\log 2,\\
&\left|v^o_{\D,0}(\eta;t)-\frac{1}{2}\log \frac{1}{|\sigma-\eta(t)|} \right|\leq \frac{1}{2}\log 2,\\
&\left|v^T_{\D,0}(\eta;t)-\frac{1}{2}\log \frac{|\sigma-\eta(t)|}{1-|\eta(t)|} \right|\leq \frac{3}{2}\log 2.
\end{split}
\end{equation*}
\end{proposition}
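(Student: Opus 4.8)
The plan is to reduce to the case $\sigma=1$ by a rotation, transfer the problem to the right half-plane $\Ha$ via the Cayley transform $C(z)=\frac{1+z}{1-z}$, where the first two estimates become short computations from Lemma \ref{Lem:hyper-semipiano}, and then deduce the third estimate from the first two using the hyperbolic Pythagoras' theorem, Proposition \ref{Prop:distance-como-suma}.

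First I would check that $t_0<+\infty$: since $\Re(\overline\sigma\eta(t))\to\Re(\overline\sigma\sigma)=1$ as $t\to+\infty$, the quantity $\Re(\overline\sigma\eta(t))$ is $\geq0$ for all large $t$, so the set defining $t_0$ is non-empty and $t_0\in[0,+\infty)$. Next I would apply the automorphism $z\mapsto\overline\sigma z$ of $\D$: it fixes $0$, carries the boundary point $\sigma$ to $1$ and the geodesic $(-1,1)\sigma$ to $(-1,1)$, leaves $|\eta(t)|$, $|\sigma-\eta(t)|$ and the number $t_0$ unchanged, and, by Remark \ref{Rem:speed-inv-conforme}(2), preserves $\omega(0,\cdot)$, $v^o_{\D,0}$ and $v^T_{\D,0}$; hence we may assume $\sigma=1$, so that $\Re\eta(t)\geq0$ for $t\geq t_0$. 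The first estimate is then immediate and in fact holds for every $t\geq0$: from $\omega(0,z)=\frac12\log\frac{1+|z|}{1-|z|}$ one gets $\omega(0,\eta(t))-\frac12\log\frac1{1-|\eta(t)|}=\frac12\log(1+|\eta(t)|)$, which lies in $[0,\frac12\log2)$.

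For the second estimate I would use that $C$ maps $\D$ biholomorphically onto $\Ha$ with $C(0)=1$, carries the geodesic $(-1,1)$ onto $\gamma_0(r):=r$ ($r>0$), and, since $\eta(t)\to1$, makes $C\circ\eta$ converge to the prime end ``$\infty$'' of $\Ha$. By Remark \ref{Rem:speed-inv-conforme}(2), $v^o_{\D,0}(\eta;t)=k_\Ha(1,\pi_{\gamma_0}(C(\eta(t))))$, and by Lemma \ref{Lem:hyper-semipiano}(3) the projection onto $\gamma_0$ of a point $\rho e^{i\theta}\in\Ha$ is $\rho$; together with Lemma \ref{Lem:hyper-semipiano}(1) this gives $v^o_{\D,0}(\eta;t)=\frac12\bigl|\log|C(\eta(t))|\bigr|$. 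Since $|C(z)|=\frac{|1+z|}{|1-z|}$ and, for $t\geq t_0$, $|1+\eta(t)|^2-|1-\eta(t)|^2=4\Re\eta(t)\geq0$, the modulus opens: $v^o_{\D,0}(\eta;t)=\frac12\log\frac{|1+\eta(t)|}{|1-\eta(t)|}$. Subtracting $\frac12\log\frac1{|1-\eta(t)|}=\frac12\log\frac1{|\sigma-\eta(t)|}$ leaves $\frac12\log|1+\eta(t)|$, which lies in $[0,\frac12\log2)$ because $1\leq|1+\eta(t)|<2$ (using $0\leq\Re\eta(t)\leq|\eta(t)|<1$). This is the second estimate.

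For the third estimate I would invoke Proposition \ref{Prop:distance-como-suma} in $\D$, with geodesic $(-1,1)$, base point $0$ and point $\eta(t)$, to obtain
\[
v^o_{\D,0}(\eta;t)+v^T_{\D,0}(\eta;t)-\tfrac12\log2\leq\omega(0,\eta(t))\leq v^o_{\D,0}(\eta;t)+v^T_{\D,0}(\eta;t);
\]
hence $v^T_{\D,0}(\eta;t)=\bigl(\omega(0,\eta(t))-v^o_{\D,0}(\eta;t)\bigr)+\delta(t)$ with $\delta(t)\in[0,\frac12\log2]$. By the first two estimates, $\omega(0,\eta(t))-v^o_{\D,0}(\eta;t)=\frac12\log\frac{|\sigma-\eta(t)|}{1-|\eta(t)|}+e(t)$ with $|e(t)|<\frac12\log2$, so $\bigl|v^T_{\D,0}(\eta;t)-\frac12\log\frac{|\sigma-\eta(t)|}{1-|\eta(t)|}\bigr|<\log2\leq\frac32\log2$, as wanted. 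None of this is really delicate; the only points needing a moment of care are the finiteness of $t_0$ and the sign of $\log|C(\eta(t))|$ for $t\geq t_0$ so that the absolute value in $v^o_{\D,0}$ opens the right way — and this is precisely where the hypothesis $t\geq t_0$ enters. (Alternatively one can avoid Proposition \ref{Prop:distance-como-suma} and compute $v^T_{\D,0}$ directly in $\Ha$: writing $C(\eta(t))=\rho e^{i\theta}$, Lemma \ref{Lem:hyper-semipiano}(4) gives $v^T_{\D,0}(\eta;t)=k_\Ha(1,e^{i\theta})$, Lemma \ref{Lem:hyper-semipiano}(2),(6) identify this with $\frac12\log\frac1{\cos\theta}$ up to an additive error in $[0,\frac12\log2]$, and $\cos\theta=\Re C(\eta(t))/|C(\eta(t))|$ is easily expressed through $|\eta(t)|$ and $|1-\eta(t)|$; this variant even gives the sharper bound $\frac12\log2$.)
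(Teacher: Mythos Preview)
Your proof is correct and follows essentially the same route as the paper's own argument: reduce to $\sigma=1$ by rotation, prove the first estimate directly from the formula for $\omega(0,\cdot)$, transfer the second to $\Ha$ via the Cayley transform together with Lemma~\ref{Lem:hyper-semipiano}(1),(3), and then derive the third from Proposition~\ref{Prop:distance-como-suma} combined with the first two. Your observation that both error terms in the first two estimates are nonnegative (so their difference has modulus $<\tfrac12\log 2$, yielding the sharper bound $\log 2$ in the third estimate) and your parenthetical direct computation of $v^T$ in $\Ha$ are nice extras not present in the paper.
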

\begin{proof}
Since $\eta(t)\to \sigma$ as $t\to +\infty$, it follows that $t_0<+\infty$.

The first equation follows immediately from the very definition of $\omega$. Indeed, for every $t\geq 0$,
\[
\left|\omega(0,\eta(t))-\frac{1}{2}\log\frac{1}{1-|\eta(t)|}\right|=\frac{1}{2}\log (1+|\eta(t)|)<\frac{1}{2}\log 2.
\]

In order to prove the other two equations, up to change $\eta$ with $\overline{\sigma}\eta$, we can assume without loss of generality that $\sigma=1$. Let $C:\D\to \Ha$ be the Cayley transform given by $C(z)=\frac{1+z}{1-z}$. For every $t\geq 0$, let us write $\rho_te^{i\theta_t}:=C(\eta(t))$, with $\rho_t>0$ and $\theta_t\in (-\pi/2,\pi/2)$.  This implies in particular, that $\rho_t\geq 1$ for all $t\geq t_0$. Then, for $t\geq t_0$ we have
\begin{equation}\label{Eq:scrivo-in-Ha}
\begin{split}
v^o_{\D,0}(\eta;t)&=v^o_{\Ha, 1}(\rho_t e^{i\theta_t};t)=k_\Ha(1, \rho_t)=\frac{1}{2} \log \rho_t\\&=\frac{1}{2} \log|C(\eta(t))º| =\frac{1}{2} \log \frac{|1+\eta(t)|}{|1-\eta(t)|},
\end{split}
\end{equation}
where, the first equality follows from Remark \ref{Rem:speed-inv-conforme}(2), the second equality follows from the definition of orthogonal speed and since the orthogonal projection of $\rho_te^{i\theta_t}$ onto the geodesic $(0,+\infty)$ is $\rho_t$ by Lemma \ref{Lem:hyper-semipiano}(3) and the third equality follows from Lemma \ref{Lem:hyper-semipiano}(1).

Therefore, by \eqref{Eq:scrivo-in-Ha}, and taking into account that for $t\geq t_0$ we have $|1+\eta(t)|\geq 1+\Re\eta(t)\geq 1$,
\[
\left|v^o_{\D,0}(\eta;t)-\frac{1}{2} \log \frac{1}{|1-\eta(t)|}\right|=\frac{1}{2}\log |1+\eta(t)|\leq \frac{1}{2}\log 2.
\]
As for the last inequality, from Proposition \ref{Prop:distance-como-suma} we have
\[
\omega(0,\eta(t))-v^o_{\D, 0}(\eta;t)\leq v^T_{\D, 0}(\eta;t)\leq \omega(0,\eta(t))-v^o_{\D, 0}(\eta;t)+\frac{1}{2}\log 2,
\]
and using the previous two inequalities for the estimates of $\omega(0,\eta(t))$ and $v^o_{\D, 0}(\eta;t)$, we get the result.
\end{proof}

\begin{remark}\label{Rem:tang-speed-conv-tg}
As a consequence of the previous proposition, we have that if $\eta:[0,+\infty)\to \D$ is a continuous curve such that $\lim_{t\to+\infty}\eta(t)=\sigma\in \partial \D$, then $\eta$ converges to $\sigma$ non-tangentially if and only if $\limsup_{t\to+\infty}v^T_{\D,0}(\eta;t)<+\infty$.
\end{remark}

\section{Continuous non-elliptic semigroups of holomorphic self-maps of the unit disc}

In this paper we consider only non-elliptic (continuous) semigroups of holomorphic self-maps of the unit disc. We refer the reader to, {\sl e.g.}, \cite{Ababook89, A1, A2, A3, A4, A5, A6, A7, BerPor78, BCD, BCD2, BCGD, ElKhReSh10b2, EliShobook10, ElShZa08, Shobook01, Sis85, Sis98} for all unproved statements and more on the subject.

A {\sl continuous non-elliptic semigroups of holomorphic self-maps of the unit disc}, or just a {\sl non-elliptic semigroup} for short, is a family $(\phi_t)$ such that for every $t\geq 0$, $\phi_t:\D \to \D$ is holomorphic, with no fixed point in $\D$ for $t>0$, $\phi_{t+s}=\phi_t \circ \phi_s$ for all $t,s\geq 0$, $\phi_0(z)=z$ for all $z\in \D$ and $[0,+\infty)\ni t\mapsto \phi_t$ is continuous with respect to the topology of uniform convergence on compacta of $\D$.

If $(\phi_t)$ is a non-elliptic semigroup in $\D$, there exists a point $\tau\in \partial \D$, the {\sl Denjoy-Wolff point} of $(\phi_t)$ such that $\lim_{t\to\infty}\phi_t(z)=\tau$ for all $z\in \D$, and the convergence is uniform on compacta.

Moreover, the angular derivative $\phi'_t(\tau)$ of $\phi_t$ at $\tau$ exists for all $t\geq 0$  and there exists $\lambda\geq 0$, {\sl the spectral value of $(\phi_t)$} such that 
\[
\phi'_t(\tau)=e^{-\lambda t}
\]
 for all $t\geq 0$.

If $(\phi_t)$ is a semigroup in $\D$, there exists an (essentially unique) {\sl holomorphic model} $(\Omega, h, z+it)$, where $h:\D\to \C$ is univalent, $h(\D)$ is starlike at infinity (namely, $h(\D)+it\subseteq h(\D)$ for all $t\geq 0$) and $h(\phi_t(z))=h(z)+it$ for all $z\in \D$ and $t\geq 0$. Moreover, $\Omega=\bigcup_{t\geq 0}h(\D)-it$ and we have the following cases: $\Omega$ is either a strip $\strip_r:=\{z\in \C: 0<\Re z<r\}$ (where $r=\pi/\lambda$ with $\lambda>0$ the spectral value of $(\phi_t)$),  or the right half plane $\Ha$, or the left half plane $\Ha^-:=\{w\in \C: \Re w<0\}$ or $\C$.  The holomorphic model is universal in the sense that any other (semi)conjugation of $(\phi_t)$ factorizes through it (see \cite{Cow81, AroBra16}). The map $h$ is called the {\sl Koenigs function} of $(\phi_t)$.

The semigroup is {\sl hyperbolic} if $\Omega$ is a strip, it is {\sl parabolic} otherwise. Moreover, parabolic semigroups are {\sl of finite hyperbolic step} if $\Omega$ is a half plane, or {\sl of zero hyperbolic step} if $\Omega=\C$. 

This definition is equivalent to the classical one, for which a semigroup $(\phi_t)$ is hyperbolic provided its spectral value is $>0$,  it is parabolic if its spectral value is $0$, and the hyperbolic step is positive if $\lim_{t\to \infty}\omega(\phi_t(z), \phi_{t+1}(z))>0$ for some---and hence any---$z\in \D$. The last  equivalence follows from the fact that $k_{\Omega}(z,w)=\lim_{t\to \infty}\omega(\phi_t(z), \phi_t(w))$ (see \cite{AroBra16}). 

\section{Speeds of non-elliptic semigroups}

Since the orbits of a non-elliptic semigroup converge to the Denjoy-Wolff point on $\partial \D$, one might study the tangential and orthogonal speed of convergence. First of all, we show that the (asymptotic behavior of) orthogonal speed and the tangential speed of an orbit of a semigroup do not depend on the starting point:

\begin{lemma}\label{Lem:speed-indip-orbit}
Let $(\phi_t)$ be a non-elliptic semigroup in $\D$ with Denjoy-Wolff point $\tau\in \partial \D$. Let $z_1, z_2\in \D$ and let $\eta_j:[0,+\infty)\to \D$ be the continuous curve defined by $\eta_j(t):=\phi_t(z_j)$, $j=1,2$. Then for every $t\geq 0$
\[
|v^o_{\D, 0}(\eta_1;t)-v^o_{\D, 0}(\eta_2;t)|\leq \omega(z_1,z_2),
\]
\[
|v^T_{\D, 0}(\eta_1;t)-v^T_{\D, 0}(\eta_2;t)|\leq 2\omega(z_1,z_2).
\]
\end{lemma}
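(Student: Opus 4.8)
The plan is to reduce everything to the half-plane picture and use the two comparison results already established, namely Proposition \ref{Prop:projection-decrease} (orthogonal projection onto a geodesic is 1-Lipschitz for the hyperbolic distance) and Lemma \ref{Lem:speed-tan-ort-base} (changing the base point of the geodesic affects the speeds only by a controlled amount). The key extra input special to semigroups — the reason we get a bound with the \emph{fixed} constant $\omega(z_1,z_2)$ rather than something that merely stays bounded — is that the two orbits $\eta_1(t)=\phi_t(z_1)$ and $\eta_2(t)=\phi_t(z_2)$ satisfy $\omega(\eta_1(t),\eta_2(t))=\omega(\phi_t(z_1),\phi_t(z_2))\le \omega(z_1,z_2)$ for every $t\ge 0$, by the Schwarz–Pick lemma applied to each $\phi_t$. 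This monotone-in-$t$ contraction is what makes the estimate uniform.

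First I would fix the geodesic $\gamma:\mathbb R\to\D$ parametrized by arc length with $\gamma(0)=0$ and $\gamma(t)\to\tau$, so that by definition $v^o_{\D,0}(\eta_j;t)=\omega(0,\pi_\gamma(\eta_j(t)))$ and $v^T_{\D,0}(\eta_j;t)=\omega(\gamma,\eta_j(t))=\omega(\eta_j(t),\pi_\gamma(\eta_j(t)))$. For the orthogonal speed, the triangle inequality gives
\[
\bigl|v^o_{\D,0}(\eta_1;t)-v^o_{\D,0}(\eta_2;t)\bigr|=\bigl|\omega(0,\pi_\gamma(\eta_1(t)))-\omega(0,\pi_\gamma(\eta_2(t)))\bigr|\le \omega\bigl(\pi_\gamma(\eta_1(t)),\pi_\gamma(\eta_2(t))\bigr),
\]
and then Proposition \ref{Prop:projection-decrease} bounds the right-hand side by $\omega(\eta_1(t),\eta_2(t))\le\omega(z_1,z_2)$, which is the first inequality.

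For the tangential speed the same idea applies but one must account for the fact that $v^T$ is a distance to the geodesic, not to a fixed point. I would write
\[
\bigl|v^T_{\D,0}(\eta_1;t)-v^T_{\D,0}(\eta_2;t)\bigr|=\bigl|\omega(\eta_1(t),\pi_\gamma(\eta_1(t)))-\omega(\eta_2(t),\pi_\gamma(\eta_2(t)))\bigr|,
\]
and estimate this by $\omega(\eta_1(t),\eta_2(t))+\omega(\pi_\gamma(\eta_1(t)),\pi_\gamma(\eta_2(t)))$ via two applications of the triangle inequality (inserting the point $\pi_\gamma(\eta_1(t))$ between $\eta_1(t)$ and $\eta_2(t)$, comparing with $\omega(\eta_2(t),\pi_\gamma(\eta_1(t)))$, and then comparing $\pi_\gamma(\eta_1(t))$ with $\pi_\gamma(\eta_2(t))$). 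Again Proposition \ref{Prop:projection-decrease} controls the second term by $\omega(\eta_1(t),\eta_2(t))$, and the Schwarz–Pick bound $\omega(\eta_1(t),\eta_2(t))\le\omega(z_1,z_2)$ turns the total into $2\omega(z_1,z_2)$, the second inequality.

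The main obstacle, such as it is, is purely bookkeeping: making sure the geodesic $\gamma$ is the \emph{same} one for both orbits (it is, since both orbits converge to the same prime end $\tau$, so Definition \ref{Def:speed-tan-ort} assigns them a common $\gamma$), so that $\pi_\gamma$ really is a single map to which Proposition \ref{Prop:projection-decrease} applies; and arranging the chain of triangle inequalities for $v^T$ so that exactly one copy of $\omega(\pi_\gamma(\eta_1(t)),\pi_\gamma(\eta_2(t)))$ and one copy of $\omega(\eta_1(t),\eta_2(t))$ appear, giving the clean factor $2$. No limiting or asymptotic argument is needed here — unlike in Lemma \ref{Lem:speed-tan-ort-base}, the Schwarz–Pick contraction makes the bound hold for every $t\ge0$ on the nose.
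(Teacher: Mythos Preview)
Your proof is correct and follows essentially the same route as the paper: for the orthogonal speed you use the triangle inequality together with Proposition~\ref{Prop:projection-decrease} and Schwarz--Pick, and for the tangential speed you chain two triangle inequalities (through the intermediate point $\pi_\gamma(\eta_1(t))$) to pick up one copy of $\omega(\eta_1(t),\eta_2(t))$ and one of $\omega(\pi_\gamma(\eta_1(t)),\pi_\gamma(\eta_2(t)))$, exactly as the paper does. The opening sentence of your plan, mentioning a reduction to the half-plane and Lemma~\ref{Lem:speed-tan-ort-base}, is a red herring---neither is used in your actual argument nor in the paper's---so you may want to drop it.
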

\begin{proof}
Let $\gamma:(-1,1)\to \D$ be the geodesic of $\D$ defined by $\gamma(r)=r\tau$. For $z\in \D$ let $\pi_\gamma(z)$ be the orthogonal projection of $z$ onto $\gamma$. Then, by the very definition of orthogonal speed of curves and Proposition \ref{Prop:projection-decrease}, we have
\begin{equation*}
\begin{split}
|v^o_{\D, 0}(\eta_1;t)-v^o_{\D, 0}(\eta_2;t)|&=|\omega(0, \pi_\gamma(\eta_1(t)))-\omega(0, \pi_\gamma(\eta_2(t)))|\\&\leq \omega(\pi_\gamma(\eta_1(t)), \pi_\gamma(\eta_2(t)))\leq \omega(\eta_1(t), \eta_2(t))\\&=\omega(\phi_t(z_1), \phi_t(z_2))\leq \omega(z_1,z_2).
\end{split}
\end{equation*}
A similar argument proves the second inequality. Namely, 
\begin{equation*}
\begin{split}
v^T_{\D, 0}(\eta_1;t)&=\omega (\phi_t(z_1), \pi_\gamma( \phi_t(z_1)))\\
&\leq  \omega (\phi_t(z_1),  \phi_t(z_2))+\omega (\phi_t(z_2), \pi_\gamma( \phi_t(z_2)))+\omega (\pi_\gamma(\phi_t(z_2)), \pi_\gamma( \phi_t(z_1))) \\
&=\omega (\phi_t(z_1),  \phi_t(z_2))+v^T_{\D, 0}(\eta_2;t)+\omega (\pi_\gamma(\phi_t(z_2)), \pi_\gamma( \phi_t(z_1)))\\
&\leq 2\omega(z_1,z_2)+v^T_{\D, 0}(\eta_2;t).
\end{split}
\end{equation*}
That is, $v^T_{\D, 0}(\eta_1;t)-v^T_{\D, 0}(\eta_2;t)\leq 2\omega (z_1,z_2)$. Changing the role of $z_1$ and $z_2$, we obtain the second inequality of the statement.
\end{proof}

Lemmas \ref{Lem:speed-indip-orbit} and \ref{Lem:speed-tan-ort-base} show that, in order to study the asymptotic behavior of the speed of convergence of semigroups' orbits to the Denjoy-Wolff point, it is enough to study the orbit starting at $0$ and considering the speed with respect to $0$. In other words, the following definition makes sense:

\begin{definition}
Let $(\phi_t)$ be a non-elliptic semigroup in $\D$ with Denjoy-Wolff point $\tau\in \partial \D$. For $t\geq 0$, we let
\[
v(t):=\omega(0,\phi_t(0)),
\]
and call $v(t)$ the {\sl total speed of $(\phi_t)$}. 

Also, let $\gamma:(-1,1)\to \D$ be the geodesic of $\D$ defined by $\gamma(r):=r\tau$ and let $\pi_\gamma:\D \to \gamma((-1,1))$ be the orthogonal projection.  For $t\geq 0$, we let
\[
v^o(t):=v^o_{\D,0}(\phi_t(0);t):=\omega(0,\pi_\gamma(\phi_t(0))),
\]
and call $v^o(t)$ the  {\sl orthogonal speed of $(\phi_t)$.} Finally, we let 
\[
v^T(t):=v^T_{\D,0}(\phi_t(0);t):=\omega(\phi_t(0),\pi_\gamma(\phi_t(0))),
\]
and call $v^T(t)$ the  {\sl tangential speed of $(\phi_t)$.}
\end{definition}

\begin{remark}\label{Rem:uno-piu-conv-tg}
It follows immediately from Remark \ref{Rem:tang-speed-conv-tg} that the orbit $[0,+\infty)\ni t\mapsto \phi_t(z)$ converges non-tangentially to $\tau$ for some---and hence any---$z\in \D$ if and only if $\limsup_{t\to+\infty}v^T(t)<+\infty$.
\end{remark}

It follows from Lemma \ref{Lem:hyper-semipiano} and the previous considerations that, if $(\phi_t)$ is a non-elliptic semigroup in $\D$ with Denjoy-Wolff point $\tau\in \partial \D$, and $C(z)=(\tau+z)/(\tau-z)$ (a biholomorphism from $\D$ to $\Ha$), setting $\rho_t e^{i\theta_t}=C(\phi_t(C^{-1}(1)))$ with $\rho_t>0$ and $\theta_t\in (-\pi/2,\pi/2)$, then
\begin{equation}\label{Eq:speed-in-H}
v^o(t)\sim \frac{1}{2}\log \rho_t,\quad  v^T(t)\sim \frac{1}{2}\log \cos \frac{1}{\theta_t}.
\end{equation}

By Proposition \ref{Prop:distance-como-suma}, if $(\phi_t)$ is a non-elliptic semigroup we have
\begin{equation}\label{Eq:split-speed-semig}
v^o(t)+v^T(t)-\frac{1}{2}\log 2\leq v(t)\leq v^o(t)+v^T(t).
\end{equation}

A second less immediate relation between the orthogonal speed and the tangential speed is contained in the following proposition:

\begin{proposition}\label{Prop:first-prop-velocity-orbit}
If $(\phi_t)$ is a non-elliptic semigroup in $\D$ then, for every $t\geq 0$,
\begin{equation}\label{Eq:split-speed-semig2}
v^T(t)\leq v^o(t)+4\log 2.
\end{equation}
\end{proposition}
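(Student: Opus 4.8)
The plan is to work inside the holomorphic model $(\Omega,h,z+it)$ of the semigroup, transfer the orbit of $0$ to $\Omega$, and exploit the starlikeness of $h(\D)$ at infinity together with the monotonicity formulas of Lemma~\ref{Lem:hyper-semipiano}. By Remark~\ref{Rem:speed-inv-conforme}(2) the orthogonal and tangential speeds are conformal invariants, so I may replace $\D$ by $\Omega$ and the orbit $t\mapsto\phi_t(0)$ by $t\mapsto h(0)+it$, computing both speeds relative to the base point $z_0:=h(0)$ and relative to the geodesic $\gamma$ of $\Omega$ joining $z_0$ to the prime end of $\Omega$ ``at $i\infty$'' (the image under $h$ of the Denjoy--Wolff point; note $h(\D)+is\subseteq h(\D)$ forces the orbit to run straight up). The key geometric input is that $h(0)+it$ stays in $\Omega$ for all $t\ge 0$ and that $\Omega$ contains the full vertical ray $\{h(0)+it:t\ge 0\}$, which is in fact the geodesic $\gamma$ — a vertical line is always a geodesic of a domain starlike at infinity only when the whole vertical line is in $\Omega$, but a vertical \emph{ray} emanating from $z_0$ need not be a geodesic. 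So the first real step is to identify $\gamma$ precisely, or rather to compare $k_\Omega(z_0,\pi_\gamma(h(0)+it))$ and $k_\Omega(h(0)+it,\gamma)$ with quantities one can estimate by inclusion of domains.

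Here is the mechanism I would use. Since $h(\D)$ is starlike at infinity, for every $t\ge 0$ the translated domain $\Omega_t:=\Omega - it \supseteq \Omega$ (using $\Omega=\bigcup_{s\ge0}h(\D)-is$, one gets $\Omega+it\subseteq\Omega$, hence $\Omega-it\supseteq\Omega$ is false in general — rather $\Omega$ itself satisfies $\Omega+it\subseteq\Omega$). The usable fact is $\Omega+it\subseteq\Omega$ for all $t\ge 0$. Apply this as follows: the map $z\mapsto z+it$ sends $\Omega$ holomorphically into $\Omega$, sends $z_0$ to $z_0+it$, and sends $\gamma$ to a geodesic $\gamma+it$ of the image; by Proposition~\ref{Prop:projection-decrease} (projection does not increase distance) combined with the Schwarz--Pick contraction of $z\mapsto z+it$, one controls $v^o(t)=k_\Omega(z_0,\pi_\gamma(z_0+it))$ from above and below by $k_\Omega(z_0,z_0+it)$ up to the comparison between $\gamma$ and $\gamma+it$. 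Concretely I expect the chain $v^o(t)\le k_\Omega(z_0,z_0+it)=v(t)$ and, via Proposition~\ref{Prop:distance-como-suma}, $v^T(t)\le v(t)-v^o(t)+\tfrac12\log 2$. So it suffices to bound $v(t)-v^o(t)$, equivalently to bound $v(t)$ above by $2v^o(t)+O(1)$; this is exactly the sort of statement announced in the introduction as $v(t)\le 2v^o(t)+C_2$, and \eqref{Eq:split-speed-semig2} is the engine for it, so I must avoid circularity and prove \eqref{Eq:split-speed-semig2} directly.

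The direct argument I favour: transfer to $\Ha$ when the model is a half-plane (parabolic of positive step or the degenerate hyperbolic-type reasoning), and to $\strip_r$ or $\C$ otherwise, but handle all cases uniformly by noting $\Omega\subseteq\C$ with $\Omega+it\subseteq\Omega$, and embedding $\Omega$ into a maximal such half-plane or strip. Then use a Cayley-type map $C$ carrying the Denjoy--Wolff prime end to $\infty$ and write $\rho_te^{i\theta_t}=C(\phi_t(C^{-1}(1)))$ as in \eqref{Eq:speed-in-H}, so that $v^o(t)\sim\tfrac12\log\rho_t$ and $v^T(t)\sim\tfrac12\log\frac{1}{\cos\theta_t}$ (up to universal additive constants coming from Proposition~\ref{Prop:expre-speed-in-disc} and Lemma~\ref{Lem:speed-tan-ort-base}). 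The point is that the semigroup property pins down how far $\theta_t$ can drift: because $\phi_{t}$ is the time-$t$ map and orbits stay in the starlike image, a Julia--Wolff--Carath\'eodory / Lindel\"of-type estimate forces $|C(\phi_t(p))|$ to grow at least like a fixed power of the ``vertical'' coordinate, which translates into $\cos\theta_t\gtrsim 1/\rho_t^{c}$ for a universal $c$; tracking constants gives $\tfrac12\log\frac{1}{\cos\theta_t}\le \tfrac12\log\rho_t + O(1)$, i.e. $v^T(t)\le v^o(t)+O(1)$, and a careful accounting of every $\tfrac12\log2$ yields the stated $4\log 2$.

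\textbf{Main obstacle.} The crux is the geometric lemma that the angular coordinate $\theta_t$ of the orbit in the half-plane picture cannot approach $\pm\pi/2$ faster than polynomially in $\rho_t$; equivalently, controlling the tangential drift of $\phi_t(0)$ purely from ``$h(\D)$ starlike at infinity'' plus ``$\phi_t$ a holomorphic self-map''. I expect this to reduce to applying Proposition~\ref{Prop:distance-como-suma} and the contraction property of the shift $z\mapsto z+i$ on $\Omega$ iteratively ($t=n$ an integer, then interpolate), getting $v(n)\le n\cdot k_\Omega(z_0,z_0+i)+O(1)$ and a matching lower bound $v^o(n)\ge \tfrac1? \, n - O(\log n)$ — but the half-plane/strip dichotomy means the clean polynomial bound must be stated so that the constant $4\log 2$ is genuinely universal across all non-elliptic types, and that uniform bookkeeping of the additive $\tfrac12\log 2$'s from Lemma~\ref{Lem:hyper-semipiano}(2),(6), Proposition~\ref{Prop:distance-como-suma}, and Proposition~\ref{Prop:expre-speed-in-disc} is where the real care lies.
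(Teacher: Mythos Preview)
Your proposal does not contain a proof: you sketch several strategies (model domain, starlikeness, iterated shifts, half-plane coordinates), notice they either become circular or require a lemma you do not prove, and end by naming the ``main obstacle'' without resolving it. The iterative bound $v(n)\le n\,k_\Omega(z_0,z_0+i)+O(1)$ you suggest gives constants depending on the semigroup, not a universal $4\log 2$, and the starlikeness of $h(\D)$ alone does not control the angular drift $\theta_t$ in any obvious way.

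The missing idea is exactly the one you mention only in passing: Julia's Lemma, applied not in the model but directly in $\D$ at the single point $z=0$. For each $t$ the map $\phi_t$ has Denjoy--Wolff point $\tau$ with $\phi_t'(\tau)=e^{-\lambda t}\le 1$, so Julia's inequality gives
\[
\frac{|\tau-\phi_t(0)|^2}{1-|\phi_t(0)|^2}\;\le\; e^{-\lambda t}\,\frac{|\tau-0|^2}{1-0}\;\le\;1.
\]
This single line already says $|\tau-\phi_t(0)|^2\le 2(1-|\phi_t(0)|)$, i.e.\ after taking logarithms, $\log\frac{1}{|\tau-\phi_t(0)|}\ge \tfrac12\log\frac{1}{1-|\phi_t(0)|}-\tfrac12\log2$. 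Feeding this into the Euclidean dictionary of Proposition~\ref{Prop:expre-speed-in-disc} yields $v(t)\le 2v^o(t)+C$ with an explicit universal $C$, and then \eqref{Eq:split-speed-semig} gives $v^T(t)\le v^o(t)+4\log2$. No passage to the Koenigs model, no identification of the geodesic in $\Omega$, and no case split on the type of the semigroup are needed; the whole argument stays in $\D$ and is three lines long.
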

\begin{proof}
Let $\tau\in \partial \D$ be the Denjoy-Wolff point of $(\phi_t)$ and let $\lambda\geq 0$ be its spectral value.
 By the Julia's Lemma, for every $t\geq 0$
\[
\frac{|\tau-\phi_t(0)|}{1-|\phi_t(0)|}\leq 4\sqrt{\frac{e^{-\lambda t}}{1-|\phi_t(0)|^2}},
\]
which is equivalent to
\[
e^{\lambda t}\frac{1+|\phi_t(0)|}{1-|\phi_t(0)|}\leq \frac{16}{|\tau-\phi_t(0)|^2}.
\]
Applying the function $x\mapsto \frac{1}{2}\log x$ to the previous inequality, we obtain for every $t\geq 0$,
\begin{equation*}
\begin{split}
\frac{1}{2}\log \frac{1}{1-|\phi_t(0)|}&\leq \frac{\lambda t}{2}+\frac{1}{2}\log \frac{1}{1-|\phi_t(0)|}+\frac{1}{2}\log(1+|\phi_t(0)|)\\&\leq \frac{1}{2}\log 16 +\log \frac{1}{|\tau-\phi_t(0)|}.
\end{split}
\end{equation*}
 Therefore, by Proposition \ref{Prop:expre-speed-in-disc}, we have for all $t\geq 0$,
\begin{equation*}
\begin{split}
v(t)&\leq \frac{1}{2}\log \frac{1}{1-|\phi_t(0)|}+\frac{1}{2}\log 2\\&\leq \frac{1}{2}\log 16 +\log \frac{1}{|\tau-\phi_t(0)|}+\frac{1}{2}\log 2\\&\leq \frac{1}{2}\log 16+\frac{3}{2}\log 2+2v^o(t)=2v^o(t)+\frac{7}{2}\log 2.
\end{split}
\end{equation*}
Hence, by \eqref{Eq:split-speed-semig}, we have for all $t\geq 0$,
\[
v^o(t)+v^T(t)\leq v(t)+\frac{1}{2}\log 2\leq 2v^o(t)+\frac{7}{2}\log 2+\frac{1}{2}\log 2.
\]
Finally, the previous equation implies that $v^T(t)\leq v^o(t)+4\log 2$ for all $t\geq 0$, and we are done.
\end{proof}

The speeds of convergence are essentially invariant under conjugation:

\begin{proposition}\label{Prop:conjug-speed}
Let $(\phi_t)$ and $(\psi_t)$ be two non-elliptic semigroups in $\D$. Suppose there exists $M\in \Aut(\D)$ such that $\phi_t=M^{-1}\circ \psi_t\circ M$ for all $t\geq 0$. Denote by $v(t), v^o(t), v^T(t)$ ({\sl respectively}, $\tilde v(t), \tilde v^o(t), \tilde v^T(t)$) the total speed, orthogonal speed and tangential speed of $(\phi_t)$ ({\sl respect.} of $(\psi_t)$). Then there exists $C>0$ such that for all $t\geq 0$
\begin{equation*}
\begin{split}
&|v(t)-\tilde v(t)|<C,\\
&|v^o(t)-\tilde v^o(t)|<C,\\
&|v^T(t)-\tilde v^T(t)|<C.
\end{split}
\end{equation*}
\end{proposition}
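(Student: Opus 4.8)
The plan is to reduce the three inequalities to a single estimate in hyperbolic geometry. Let $\tau\in\partial\D$ be the Denjoy--Wolff point of $(\phi_t)$ and write $\sigma:=M(\tau)$. From $\psi_t=M\circ\phi_t\circ M^{-1}$ and $\phi_t(z)\to\tau$ for every $z\in\D$ one reads off that $\sigma$ is the Denjoy--Wolff point of $(\psi_t)$, so the geodesic entering the definition of $\tilde v^o,\tilde v^T$ is $\tilde\gamma:=(-1,1)\sigma$. Put $w_0:=M^{-1}(0)$ and $d:=\omega(0,w_0)$. Since $M$ is a hyperbolic isometry, $\tilde v(t)=\omega(0,\psi_t(0))=\omega(w_0,\phi_t(w_0))$, and the triangle inequality together with the Schwarz--Pick contraction $\omega(\phi_t(a),\phi_t(b))\le\omega(a,b)$ gives $|v(t)-\tilde v(t)|\le 2d$ for all $t\ge0$: this settles the total speed. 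The remaining two inequalities involve comparing speeds along \emph{two different orbits} and, less obviously, with respect to \emph{two different geodesics}.

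I would next rewrite $\tilde v^o$ and $\tilde v^T$ intrinsically for $(\phi_t)$. The biholomorphism $M^{-1}$ sends $\tilde\gamma$ to a geodesic $\gamma':=M^{-1}(\tilde\gamma)$ of $\D$ which still ends at $\tau$ (because $M^{-1}(\sigma)=\tau$) but in general differs from $\gamma:=(-1,1)\tau$. An isometry intertwines hyperbolic projections onto geodesics, $\pi_{M^{-1}(\tilde\gamma)}\circ M^{-1}=M^{-1}\circ\pi_{\tilde\gamma}$, and $M^{-1}(\psi_t(0))=\phi_t(w_0)$; hence, by the conformal invariance of the speeds (Remark \ref{Rem:speed-inv-conforme}(2)),
\[
\tilde v^o(t)=\omega\big(w_0,\pi_{\gamma'}(\phi_t(w_0))\big),\qquad \tilde v^T(t)=\omega\big(\phi_t(w_0),\gamma'\big).
\]
The passage from base point $w_0$ to base point $0$ is then cheap: the triangle inequality, Proposition \ref{Prop:projection-decrease} (projections do not increase the hyperbolic distance) and the $1$-Lipschitz property of $z\mapsto\omega(z,\gamma')$ yield $\big|\tilde v^o(t)-\omega(0,\pi_{\gamma'}(\phi_t(0)))\big|\le 2d$ and $\big|\tilde v^T(t)-\omega(\phi_t(0),\gamma')\big|\le d$ for all $t\ge0$. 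So the whole proposition follows once one proves
\[
K:=\sup_{t\ge0}\,\omega\big(\pi_{\gamma}(\phi_t(0)),\,\pi_{\gamma'}(\phi_t(0))\big)<\infty :
\]
indeed, by the triangle inequality and the definition of $K$ (recall $v^o(t)=\omega(0,\pi_\gamma(\phi_t(0)))$ and $v^T(t)=\omega(\phi_t(0),\gamma)$), one gets $\big|\omega(0,\pi_{\gamma'}(\phi_t(0)))-v^o(t)\big|\le K$ and $\big|\omega(\phi_t(0),\gamma')-v^T(t)\big|\le K$, whence $|v^o(t)-\tilde v^o(t)|\le 2d+K$ and $|v^T(t)-\tilde v^T(t)|\le d+K$, and $C:=2d+K+1$ works for all three inequalities.

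The genuinely delicate step, which I expect to be the main obstacle, is the finiteness of $K$: the hyperbolic projections onto two geodesics that share the boundary endpoint $\tau$ must stay at bounded hyperbolic distance \emph{when they are evaluated along the orbit} $t\mapsto\phi_t(0)$, which converges to $\tau$. The convergence $\phi_t(0)\to\tau$ is essential here (near the \emph{other} endpoint of $\gamma$ the two projections are arbitrarily far apart), and one must allow the approach to $\tau$ to be tangential. I would transfer the problem to $\Ha$ via the Cayley map sending $\tau$ to $\infty$: then $\gamma$ becomes the positive real ray $\{r:r>0\}$ and $\gamma'$ a horizontal ray $\{r+ic_0:r>0\}$ for some $c_0\in\R$ (if $c_0=0$ there is nothing to prove). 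By Lemma \ref{Lem:hyper-semipiano}(3) the hyperbolic projection of $w\in\Ha$ onto $\{r:r>0\}$ is $|w|$, and conjugating by the vertical translation $w\mapsto w-ic_0$ shows that the projection onto $\{r+ic_0:r>0\}$ is $|w-ic_0|+ic_0$. Since $\phi_t(0)\to\tau$, the corresponding points $w=w(t)\in\Ha$ satisfy $|w|\to\infty$; using $\kappa_\Ha(w;v)=|v|/(2\Re w)$, the Euclidean distance between $|w|$ and $|w-ic_0|+ic_0$ is at most $\sqrt2\,|c_0|$ while both of them have real part at least $|w|-|c_0|\to\infty$, so $\omega\big(\pi_\gamma(\phi_t(0)),\pi_{\gamma'}(\phi_t(0))\big)=k_\Ha\big(|w|,|w-ic_0|+ic_0\big)\to0$ as $t\to\infty$. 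Finiteness of $K$ then follows, since on each compact interval $[0,T]$ the quantity is bounded by continuity of $t\mapsto\phi_t(0)$, of the projections (Proposition \ref{Prop:hyperbolic-projection-geo}) and of $\omega$. Putting the three reductions together gives the stated inequalities with the single constant $C$.
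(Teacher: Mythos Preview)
Your proof is correct and follows essentially the same route as the paper's: transfer via the isometry $M$, then compare projections onto two geodesics sharing the endpoint $\tau$ by passing to $\Ha$ via a Cayley map; your computation $k_\Ha(|w|,|w-ic_0|+ic_0)\to0$ is precisely formula \eqref{Eq:zero-t-ort-geo} in the proof of Lemma~\ref{Lem:speed-tan-ort-base}. The only organizational difference is that the paper invokes Lemma~\ref{Lem:speed-tan-ort-base} to handle $v^T$ and then deduces the $v^o$ estimate from the Pythagoras relation \eqref{Eq:split-speed-semig}, whereas you unify both through the single quantity $K=\sup_t\omega(\pi_\gamma(\phi_t(0)),\pi_{\gamma'}(\phi_t(0)))$ and reprove that key estimate inline---a slightly more self-contained packaging of the same argument.
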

\begin{proof}
Let $\tau\in \partial \D$ be the Denjoy-Wolff point of $(\phi_t)$ and $\tilde \tau\in \partial \D$ that of $(\psi_t)$. Let $\gamma:(0,+\infty)\to \D$ ({\sl respectively}, $\tilde \gamma:(0,+\infty)\to \D$) be the geodesic in $\D$ parameterized by arc length such that $\gamma(0)=0$ ({\sl respect.}, $\tilde\gamma(0)=0$) and $\lim_{t\to+\infty}\gamma(t)=\tau$ ({\sl respect.},  $\lim_{t\to+\infty}\gamma(t)=\tilde\tau$). 

Since $M$ is an isometry for the hyperbolic distance, for all $t\geq 0$,
\[
v(t)=\omega(0, \phi_t(0))=\omega(0, (M^{-1}\circ \psi_t\circ M)(0))=\omega(M(0), \psi_t(M(0)). 
\]
Hence, for all $t\geq 0$,
\begin{equation*}
\begin{split}
|v(t)-\tilde v(t)|&=|\omega(M(0), \psi_t(M(0))-\omega(0, \psi_t(0))|\\ &\leq  |\omega(M(0), \psi_t(M(0))-\omega(0, \psi_t(M(0))|\\&+|\omega(0, \psi_t(M(0))-\omega(0, \psi_t(0))|\\&\leq \omega(M(0), 0)+\omega(\psi_t(M(0)), \psi_t(0))\leq 2\omega(M(0), 0)=:C_0.
\end{split}
\end{equation*}
Moreover, since $M$ is an isometry for the hyperbolic distance, the curve $\gamma_1:(0,+\infty)\to \D$ defined by $\gamma_1:=M^{-1}\circ \gamma$ is a geodesic in $\D$ parameterized by arc length. Hence, for all $t\geq 0$,
\[
v^T(t)=\omega(\phi_t(0), \gamma)=\omega(M^{-1}(\phi_t(0)),  \gamma_1)=\omega(\psi_t(M^{-1}(0)), \gamma_1).
\]
By Lemma \ref{Lem:speed-tan-ort-base}, $\lim_{t\to+\infty}|\tilde v^T(t)-\omega(\psi_t(M^{-1}(0), \gamma_1))|=0$, thus there exists $C_1>0$ such that $|v^T(t)-\tilde v^T(t)|<C_1$ for all $t\geq 0$.

Finally, by \eqref{Eq:split-speed-semig} we have for all $t\geq 0$,
\[
v^o(t)-\tilde v^o(t)\leq v(t)-v^T(t)+\frac{1}{2}\log 2-\tilde v(t)+\tilde v^T(t)\leq  C_0+C_1+\frac{1}{2}\log 2.
\]
The same argument proves that $\tilde v^o(t)-v^o(t)\leq C_0+C_1+\frac{1}{2}\log 2$, and we are done.
\end{proof}

If $\Omega$ is a domain starlike at infinity, and $p\in \Omega$, we let 
\[
\Omega^+:=\Omega\cup \{w\in \C: \Re w>\Re p\}, \quad \Omega^-:=\Omega\cup \{w\in \C: \Re w<\Re p\}.
\]
Note that $\Omega^\pm$ is a domain starlike at infinity. Moreover, for any open set $D\subset \C$ and $p\in D$, we let
\[
\delta_D(p)=\inf\{|z-p|: z\in \C\setminus D\}.
\]

The following result is a consequence of \cite{BCGDZ} and Remark \ref{Rem:uno-piu-conv-tg}:

\begin{theorem}
Let $(\phi_t)$ be a non-elliptic semigroup in $\D$, with Koenigs function $h$. Let $p\in h(\D)$. Then $\limsup_{t\to \infty}v^T(t)<+\infty$ if and only if there exists $C>0$ such that 
\[
\frac{1}{C}\min\{t,\delta_{h(\D)^+}(p+it)\}\leq \min\{t,\delta_{h(\D)^-}(p+it)\}\leq C\min\{t,\delta_{h(\D)^+}(p+it)\}
\]
for all $t\geq 0$.
\end{theorem}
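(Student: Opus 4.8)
The plan is to deduce the statement by combining the dynamical reformulation of the tangential-speed condition given in Remark~\ref{Rem:uno-piu-conv-tg} with the geometric description of the slope of semigroup orbits established in \cite{BCGDZ}.

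First I would invoke Remark~\ref{Rem:uno-piu-conv-tg}: $\limsup_{t\to\infty}v^T(t)<+\infty$ holds if and only if the orbit $[0,+\infty)\ni t\mapsto\phi_t(z)$ converges to the Denjoy--Wolff point $\tau$ non-tangentially, for some --- and hence every --- $z\in\D$. Thus I am free to choose the base point to be $z_0:=h^{-1}(p)$. In the holomorphic model $(h(\D),h,z+it)$, whose image $h(\D)$ is starlike at infinity, the orbit of $z_0$ is carried by $h$ onto the vertical half-line $t\mapsto p+it$, which tends to $\infty$ inside $h(\D)$. Hence everything reduces to characterizing non-tangential convergence of this particular orbit purely in terms of the geometry of $h(\D)$.

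That characterization is precisely what \cite{BCGDZ} supplies: the orbit through $z_0$ converges non-tangentially to $\tau$ if and only if the two one-sided widths $\min\{t,\delta_{h(\D)^+}(p+it)\}$ and $\min\{t,\delta_{h(\D)^-}(p+it)\}$ of the Koenigs domain are comparable for large $t$, i.e.\ the double inequality of the statement holds for all sufficiently large $t$. Upgrading ``for all large $t$'' to ``for all $t\ge 0$'' is harmless: on any compact interval $[0,T]$ the four quantities involved are continuous, strictly positive and bounded above, so enlarging $C$ absorbs the remaining parameters. Together with the first step this yields the theorem; in particular the condition turns out not to depend on the choice of $p\in h(\D)$, which is also forced a posteriori, since non-tangential convergence of one orbit is equivalent, by Remark~\ref{Rem:uno-piu-conv-tg}, to that of every orbit.

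The obstacle I foresee is essentially one of bookkeeping: extracting from \cite{BCGDZ} exactly the equivalence ``non-tangential convergence $\Longleftrightarrow$ quasi-symmetric comparison of $\delta_{h(\D)^\pm}$'', since that reference may formulate matters through the cluster set of the slope, or through the hyperbolic distance from the orbit to the geodesic of $\D$ landing at $\tau$, rather than through $v^T$ directly. If a more self-contained route were desired, one would transport the orbit $p+it$ back to $\Ha$ by $h^{-1}$ followed by the Cayley map $z\mapsto(\tau+z)/(\tau-z)$ (which sends $\tau$ to $\infty$), note that non-tangential convergence to $\tau$ means the transported curve eventually lies in a proper subsector $\{\,w\in\Ha:|\arg w|\le\tfrac\pi2-\varepsilon\,\}$, and then apply the standard extremal-length and harmonic-measure estimates relating the room available to the orbit on each of its two sides to $\delta_{h(\D)^\pm}(p+it)$ --- this last part being the technical heart of \cite{BCGDZ}, which I would cite rather than reprove.
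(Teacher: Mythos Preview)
Your approach is exactly the one the paper takes: the theorem is stated there simply as ``a consequence of \cite{BCGDZ} and Remark~\ref{Rem:uno-piu-conv-tg}'', and you have correctly unpacked that into (i) using Remark~\ref{Rem:uno-piu-conv-tg} to translate the condition $\limsup_{t\to\infty}v^T(t)<+\infty$ into non-tangential convergence of orbits, and (ii) invoking the main result of \cite{BCGDZ} for the geometric characterization in terms of $\delta_{h(\D)^\pm}$. One small correction to your bookkeeping step: at $t=0$ both quantities $\min\{t,\delta_{h(\D)^\pm}(p+it)\}$ vanish rather than being strictly positive, but the inequality still holds trivially there, and for small $t>0$ both mins equal $t$, so the extension to all $t\ge 0$ goes through.
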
 

In particular, if $(\phi_t)$ is hyperbolic, there exists $C>0$ such that $v^T(t)\leq C$ for all $t\geq 0$. Hence, for hyperbolic semigroups, $v^o(t)\sim v(t)$. 

Note that this implies that, in particular, for hyperbolic semigroup the orthogonal speed is {\sl essentially monotone}, in the sense that, if $(\phi_t)$ is a hyperbolic semigroup with Koenigs function $h$, total speed $v(t)$ and orthogonal speed $v^o(t)$ and $(\tilde\phi_t)$ is a hyperbolic semigroup with Koenigs function $\tilde h$ and $h(\D)\subset \tilde h(\D)$, total speed $\tilde v(t)$ and orthogonal speed $\tilde v^o(t)$, then by \eqref{Eq:split-speed-semig}, 
\[
v^o(t)\geq \tilde v^o(t)+C
\]
 for all $t\geq 0$ and some $C>0$, since in the previous case, $v(t)\geq \tilde v(t)$ for all $t\geq 0$ by the monotonicity of the hyperbolic distance.
 
\section{Total speed of convergence}

In this section we consider the total speed of convergence of orbits of hyperbolic and parabolic semigroups to the Denjoy-Wolff point. 

\begin{proposition}\label{Prop:total-speed}
Let $(\phi_t)$ be a non-elliptic semigroup in $\D$, with Denjoy-Wolff point $\tau\in \partial \D$ and $\phi_t'(\tau)=e^{-\lambda t}$ for $\lambda\geq 0$ and $t\geq 0$ (in particular, $(\phi_t)$ is hyperbolic if $\lambda>0$, parabolic otherwise).  Then 
\begin{equation}\label{Eq:rate conv hyp}
\lim_{t\to +\infty}\frac{v(t)}{t}=\lim_{t\to+\infty} \frac{v^{o}(t)}{t}=\frac{\lambda}{2},
\end{equation}
and
\begin{equation*}
\lim_{t\to +\infty}\frac{v^T(t)}{t}=0.
\end{equation*}
\end{proposition}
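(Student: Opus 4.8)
The plan is to reduce the whole statement to the single estimate $\lim_{t\to+\infty}v(t)/t=\lambda/2$; granting this, the other two limits come out of inequalities already established. So I would first prove (or simply quote) that the total speed has affine growth rate $\lambda/2$. In the present normalization of the hyperbolic metric this is exactly the assertion that the divergence rate of $(\phi_t)$ equals $\lambda/2$, and in that form it can be quoted from \cite{AroBra16}. If one wants it self-contained, the lower bound $\liminf_{t\to+\infty}v(t)/t\ge\lambda/2$ is immediate from Julia's Lemma applied at the Denjoy--Wolff point $\tau$ to the horocycle through the origin: it gives $|\tau-\phi_t(0)|^2\le e^{-\lambda t}(1-|\phi_t(0)|^2)$, and since $|\tau-\phi_t(0)|\ge 1-|\phi_t(0)|$ this forces $1-|\phi_t(0)|\le 2e^{-\lambda t}$, whence $v(t)=\tfrac12\log\frac{1+|\phi_t(0)|}{1-|\phi_t(0)|}\ge\tfrac{\lambda t}{2}-\tfrac12\log 2$ (trivial when $\lambda=0$).

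For the matching upper bound $\limsup_{t\to+\infty}v(t)/t\le\lambda/2$ I would pass to the holomorphic model $(\Omega,h,z+it)$, so that $v(t)=k_{h(\D)}(h(0),h(0)+it)$, and exploit that $\bigcup_{s\ge0}(h(\D)-is)=\Omega$ together with the starlikeness of $h(\D)$ at infinity. The key observation is that the sets $A_n:=\{w:w+in\in h(\D)\}$ form an increasing open exhaustion of $\Omega$, so any compact subset of $\Omega$ — in particular a long horizontal segment — lies in some $A_n$; propagating such a segment upward by starlikeness produces, for every $\varepsilon>0$, a vertical half-strip $W_\varepsilon\subseteq h(\D)$ of width $>r-\varepsilon$ when $\Omega=\strip_r$ (where $r=\pi/\lambda$), and of arbitrarily large width when $\Omega$ is a half-plane or $\C$. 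Joining $h(0)$ to $W_\varepsilon$ by a bounded path and estimating the hyperbolic distance along the half-strip (it grows like $\frac{\pi}{2\cdot\mathrm{width}}$ times its length), one obtains $v(t)\le\frac{\pi t}{2(r-\varepsilon)}+O_\varepsilon(1)$ in the hyperbolic case and $v(t)=o(t)$ otherwise; letting $\varepsilon\to0$, resp. the width $\to\infty$, finishes this step. I expect this upper bound to be the only genuine obstacle; everything below is bookkeeping with the results of Section~5.

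Next I would show $\lim_{t\to+\infty}v^T(t)/t=0$. If $(\phi_t)$ is hyperbolic, the Theorem deduced above from \cite{BCGDZ}, together with the remark after it, gives $C>0$ with $v^T(t)\le C$ for all $t$, so $v^T(t)/t\to0$. If $(\phi_t)$ is parabolic then $\lambda=0$, and combining $v^T(t)\le v^o(t)+4\log2$ from Proposition~\ref{Prop:first-prop-velocity-orbit} with $v^o(t)\le v(t)+\tfrac12\log2$ (from \eqref{Eq:split-speed-semig}, since $v^T(t)\ge0$) yields $0\le v^T(t)\le v(t)+\tfrac92\log2$, so $v^T(t)/t\to0$ by the first step. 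Finally, \eqref{Eq:split-speed-semig} gives $-\tfrac12\log2\le v(t)-v^o(t)\le v^T(t)$, hence $|v(t)-v^o(t)|\le v^T(t)+\tfrac12\log2$; dividing by $t$ and using the preceding paragraph, $v^o(t)/t$ and $v(t)/t$ have the same limit, which equals $\lambda/2$. This completes the plan; the non-routine ingredient is the upper bound in the first step (equivalently, the divergence-rate computation), while the remaining external inputs are Proposition~\ref{Prop:first-prop-velocity-orbit} and the \cite{BCGDZ} Theorem.
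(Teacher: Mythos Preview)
Your proposal is correct and follows the same overall strategy as the paper: quote \cite{AroBra16} for $\lim_{t\to+\infty} v(t)/t=\lambda/2$, use the known boundedness of $v^T$ in the hyperbolic case, and read off the remaining limits from \eqref{Eq:split-speed-semig}. The only difference is that in the parabolic case you route through Proposition~\ref{Prop:first-prop-velocity-orbit} unnecessarily: since $v^o(t)\ge 0$, inequality \eqref{Eq:split-speed-semig} already gives $v^T(t)\le v(t)+\tfrac12\log 2$ directly (and likewise $v^o(t)\le v(t)+\tfrac12\log 2$), which is exactly how the paper concludes.
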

\begin{proof} 
By \cite{AroBra16}, 
$$
\frac{\lambda}{2}=\lim_{t\to +\infty} \frac{\omega(0,\phi_t(0))}{t}=\lim_{t\to +\infty}\frac{v(t)}{t}.
$$

In case $\lambda=0$, that is, $(\phi_t)$ is parabolic, it follows immediately from \eqref{Eq:split-speed-semig} that
\[
\lim_{t\to+\infty} \frac{v^{o}(t)}{t}=\lim_{t\to +\infty}\frac{v^T(t)}{t}=0.
\]

In case $\lambda>0$, that is, $(\phi_t)$ is hyperbolic, we already noticed that $\limsup_{t\to+\infty}v^T(t)<+\infty$. Thus from \eqref{Eq:split-speed-semig} we have the result. 
\end{proof}

According to the type of the semigroup, we have also a simple  lower bound on the total speed:

\begin{proposition}\label{Prop:total-speed-lowerb}
Let $(\phi_t)$ be a non-elliptic semigroup in $\D$, with Denjoy-Wolff point $\tau\in \partial \D$.  
\begin{itemize}
\item If $(\phi_t)$ is hyperbolic  with   spectral value $\lambda> 0$, then 
\[
\liminf_{t\to+\infty}[v(t)-\frac{\lambda}{2}t]>-\infty,
\]
\item if  $(\phi_t)$ is parabolic of positive hyperbolic step, then 
\[
\liminf_{t\to+\infty}[v(t)-\log t]>-\infty,
\]
\item if  $(\phi_t)$ is parabolic of zero hyperbolic step, then 
\[
\liminf_{t\to+\infty}[v(t)-\frac{1}{4}\log t]>-\infty.
\]
\end{itemize}
\end{proposition}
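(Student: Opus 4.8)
The plan is to pass to the holomorphic model $(\Omega,h,z+it)$ of $(\phi_t)$ and exploit monotonicity of the hyperbolic distance under inclusions. Since $h\colon\D\to h(\D)$ is a biholomorphism and $h(\phi_t(0))=h(0)+it$, one has
\[
v(t)=\omega(0,\phi_t(0))=k_{h(\D)}\bigl(h(0),h(0)+it\bigr),
\]
and, because $h(\D)\subseteq\Omega$ (as $\Omega=\bigcup_{s\ge0}(h(\D)-is)$), monotonicity gives $v(t)\ge k_\Omega\bigl(h(0),h(0)+it\bigr)$. Writing $h(0)=x_0+iy_0$, it then suffices to bound $k_\Omega\bigl(h(0),h(0)+it\bigr)$ from below, where $\Omega$ is a strip $\strip_r$ with $r=\pi/\lambda$ in the hyperbolic case, a half-plane in the parabolic finite-step case, and $\C$ in the parabolic zero-step case. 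In the first two cases this comparison already yields the sharp bound; the third needs a separate idea, since $\Omega=\C$ carries no information.

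In the hyperbolic case I would record the density of $\strip_r$, obtained by transporting $\kappa_\Ha(w;v)=\frac{|v|}{2\Re w}$ through a conformal map $\strip_r\to\Ha$ (e.g.\ $z\mapsto -ie^{i\pi z/r}$): namely $\kappa_{\strip_r}(z;v)=\frac{\pi|v|}{2r\sin(\pi\Re z/r)}\ge\frac{\pi|v|}{2r}$. Hence any Lipschitz curve $\gamma$ in $\strip_r$ joining $h(0)$ to $h(0)+it$ has hyperbolic length at least $\frac{\pi}{2r}\int_0^1|\Im\gamma'(s)|\,ds\ge\frac{\pi}{2r}\,t=\frac{\lambda}{2}t$. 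Therefore $v(t)\ge\frac{\lambda}{2}t$ for all $t$, so $\liminf_{t\to+\infty}\bigl[v(t)-\frac{\lambda}{2}t\bigr]\ge0$.

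In the parabolic finite-step case $\Omega$ is $\Ha$ or $\Ha^-$; up to the isometry $w\mapsto-w$ (if $\Omega=\Ha^-$) I may assume $\Omega=\Ha$, so $\xi_0:=x_0>0$. By invariance of $k_\Ha$ under $w\mapsto w-iy_0$ we have $k_\Ha\bigl(h(0),h(0)+it\bigr)=k_\Ha(\xi_0,\xi_0+it)$; writing $\xi_0+it=\rho e^{i\beta}$ with $\rho=\sqrt{\xi_0^2+t^2}$ and $\cos\beta=\xi_0/\rho$, Lemma~\ref{Lem:hyper-semipiano}(2) together with (1) gives
\[
k_\Ha(\xi_0,\xi_0+it)\ \ge\ k_\Ha(\xi_0,\rho)+\tfrac12\log\tfrac{1}{\cos\beta}=\log\tfrac{\rho}{\xi_0}\ \ge\ \log t-\log\xi_0 .
\]
Hence $v(t)\ge\log t-\log\xi_0$, so $\liminf_{t\to+\infty}\bigl[v(t)-\log t\bigr]\ge-\log\xi_0>-\infty$.

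For the parabolic zero-step case the model gives nothing, so I would argue inside $h(\D)\subsetneq\C$ itself, using the Koebe $\tfrac14$-theorem in the form $\kappa_{h(\D)}(w;1)\ge\frac{1}{4\,\delta_{h(\D)}(w)}$. Put $d_0:=\delta_{h(\D)}(h(0))>0$. For a curve $\gamma$ from $h(0)$ to $h(0)+it$ parametrized by Euclidean arc length $\sigma\in[0,L]$, the $1$-Lipschitz bound $\delta_{h(\D)}(\gamma(\sigma))\le d_0+\sigma$ gives hyperbolic length at least $\int_0^L\frac{d\sigma}{4(d_0+\sigma)}=\frac14\log\bigl(1+\frac{L}{d_0}\bigr)\ge\frac14\log\bigl(1+\frac{t}{d_0}\bigr)$ (as $L\ge t$). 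Thus $v(t)\ge\frac14\log t-\frac14\log d_0$ and $\liminf_{t\to+\infty}\bigl[v(t)-\frac14\log t\bigr]\ge-\frac14\log d_0>-\infty$; note this argument applies to every non-elliptic semigroup, in accordance with the rephrased estimate of Betsakos, and one could alternatively deduce it from $v^o(t)\ge\frac14\log t+C$ via \eqref{Eq:split-speed-semig}. I expect this last case to be the only real obstacle: the coarse comparison with the model domain is vacuous there, so one must extract by hand both the distortion estimate and the Lipschitz control of $\delta_{h(\D)}$, whereas the hyperbolic and parabolic finite-step bounds are immediate once one sits inside the strip and the half-plane.
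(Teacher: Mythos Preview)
Your argument is correct in all three cases. The hyperbolic and parabolic finite-step cases follow essentially the paper's route---pass to the model and use monotonicity under $h(\D)\subseteq\Omega$---with only cosmetic differences: the paper moves from $h(0)$ to the central line of the strip by the triangle inequality and then computes $k_{\strip_{\pi/\lambda}}(\tfrac{\pi}{2\lambda},\tfrac{\pi}{2\lambda}+it)=\tfrac{\lambda}{2}t$ directly, whereas you use the density bound $\kappa_{\strip_r}\ge\tfrac{\pi}{2r}$ globally; in the half-plane case the two computations are virtually identical.

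The zero-step case is where the approaches genuinely diverge. The paper exploits the starlike-at-infinity geometry: since $h(\D)\subsetneq\C$ is starlike at infinity, it sits inside a Koebe slit domain $\mathcal K_p=\C\setminus\{\Re\zeta=\Re p,\ \Im\zeta\le\Im p\}$, and the explicit biholomorphism $\mathcal K_0\ni z\mapsto\sqrt{-iz}\in\Ha$ yields $k_{\mathcal K_0}(i,ti)=k_\Ha(1,\sqrt t)=\tfrac14\log t$ exactly. Your route---the distortion estimate $\kappa_{h(\D)}\ge\tfrac{1}{4\delta_{h(\D)}}$ together with the $1$-Lipschitz control $\delta_{h(\D)}(\gamma(\sigma))\le d_0+\sigma$---avoids the starlike structure entirely and works for any simply connected proper subdomain of $\C$; the price is that you get a constant depending on $d_0$ rather than the clean identity, but for the $\liminf$ statement this is irrelevant. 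Each approach has its merit: the paper's gives an exact formula on the comparison domain (useful for the sharpness discussion in Remark~\ref{Rem:limit-case-speed}), while yours is more robust and, as you note, reproduces the Betsakos-type bound directly.
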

\begin{proof}
Let $(\phi_t)$ be hyperbolic with spectral value $\lambda>0$. The canonical model of $(\phi_t)$ is $(\strip_{\frac{\pi}{\lambda}}, h, z+it)$. Hence, for every $t\geq 0$,
\begin{equation*}
\begin{split}
v(t)&=\omega(0,\phi_t(0))=k_{h(\D)}(h(0), h(\phi_t(0)))\\&=k_{h(\D)}(h(0), h(0)+it)\geq k_{\strip_{\pi/\lambda}}(h(0), h(0)+it)\\&
\geq k_{\strip_{\pi/\lambda}} (\frac{\pi}{2\lambda},\frac{\pi}{2\lambda}+it)-k_{\strip_{\pi/\lambda}} (\frac{\pi}{2\lambda},h(0))-k_{\strip_{\pi/\lambda}} (h(0)+it,\frac{\pi}{2\lambda}+it)\\
&= \frac{\lambda}{2}t-2k_{\strip_{\pi/\lambda}} (\frac{\pi}{2\lambda},h(0)),
\end{split}
\end{equation*}
where the last equality follows from a direct computation and taking into account that $k_{\strip_{\pi/\lambda}} (h(0)+it,\frac{\pi}{2\lambda}+it)=k_{\strip_{\pi/\lambda}} (h(0),\frac{\pi}{2\lambda})$ for all $t\in \R$ since $z\mapsto z+it$ is an automorphism of $\strip_{\frac{\pi}{\lambda}}$. From this, the result for hyperbolic semigroups follows at once.

Now, assume that $(\phi_t)$ is parabolic of positive hyperbolic step. We can assume that its canonical model is $(\Ha, h, z+it)$ (in case the canonical model is $(\Ha^-, h, z+it)$ the argument is similar). Arguing as in the hyperbolic case, we see that
\[
v(t)\geq k_\Ha(1,1+it)+C,
\]
for some constant $C\in \R$ and every $t\geq 0$. Now, write $1+it=\rho_t e^{i\theta_t}$ for $\rho_t>0$ and $\theta_t\in [0, \pi/2)$. A simple computation shows that $\rho_t=\sqrt{1+t^2}$ and $\cos \theta_t=\frac{1}{\sqrt{1+t^2}}$. Therefore, by Lemma \ref{Lem:hyper-semipiano}(1) and (2), we have
\[
k_\Ha(1,1+it)\geq k_\Ha(1, \sqrt{1+t^2})+\frac{1}{2}\log \sqrt{1+t^2}=\log \sqrt{1+t^2}\geq \log t,
\]
and the result follows in this case as well.

Finally, in case $(\phi_t)$ is parabolic of zero hyperbolic step, the canonical model is $(\C, h, z+it)$. Since $h(\D)$ is starlike at infinity and is different from $\C$, there exists $p\in \C$ such that $p-it\not\in h(\D)$ for all $t\geq 0$ and $p+it\in h(\D)$ for all $t>0$.  Hence, $h(\D)\subseteq \mathcal K_p$, where $\mathcal K_p$ is the Koebe domain $\C\setminus\{\zeta\in \C: \Re \zeta=\Re p, \Im \zeta\leq \Im p\}$. Therefore, arguing as in the previous cases, we find $C\in \R$ such that for every $t\geq 0$,
\[
v(t)\geq k_{\mathcal K_p}(p+i, p+ti)+C=k_{\mathcal K_0}(i, ti)+C. 
\]
Taking into account that the map $\mathcal K_0 \ni z\mapsto \sqrt{-iz}\in \Ha$ is a biholomorphism, where the branch of the square root is chosen so that $\sqrt{1}=1$, we have by Lemma \ref{Lem:hyper-semipiano}(1) 
\[
k_{\mathcal K_0}(i, ti)=k_\Ha(1,\sqrt{t})=\frac{1}{4}\log t,
\]
and we are done.
\end{proof}

\begin{remark}\label{Rem:limit-case-speed}
The bound given by Proposition \ref{Prop:total-speed-lowerb} is sharp. Indeed, as it is clear from the proof, if $(\phi_t)$ is a hyperbolic group in $\D$ with spectral value $\lambda>0$ then there exists $C>0$ such that $|v(t)-\frac{\lambda}{2} t|<C$ for every $t\geq 0$, while, if $(\phi_t)$ is a parabolic group then  there exists $C>0$ such that $|v(t)-\log t|<C$ for every $t\geq 0$---so that, in this sense, non-elliptic groups in $\D$ have the lowest total speed. Moreover, the semigroup $(\phi_t)$ in $\D$ defined as $\phi_t(z):=h^{-1}(h(z)+it)$, $z\in \D$, where $h:\D \to \mathcal K_0$ is a Riemann map for the Koebe domain $\mathcal K_0$, has the property that there exists $C>0$ such that $|v(t)-\frac{1}{4}\log t|<C$ for all $t\geq 0$.
\end{remark}

A direct consequence of Proposition \ref{Prop:total-speed} and Proposition \ref{Prop:total-speed-lowerb} is the following:

\begin{corollary}
Let $(\phi_t)$ be a non-elliptic semigroup in $\D$. Then
\[
\liminf_{t\to+\infty}\frac{v(t)}{\log t}>0, \quad \limsup_{t\to+\infty}\frac{v(t)}{t}<+\infty.
\]
\end{corollary}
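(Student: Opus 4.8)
The plan is to derive both inequalities directly from Proposition \ref{Prop:total-speed} and Proposition \ref{Prop:total-speed-lowerb}, handling the two assertions separately.

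For the upper bound $\limsup_{t\to+\infty} v(t)/t<+\infty$, I would simply invoke \eqref{Eq:rate conv hyp}: Proposition \ref{Prop:total-speed} gives $\lim_{t\to+\infty} v(t)/t=\lambda/2$, where $\lambda\geq 0$ is the spectral value of $(\phi_t)$. Since $\lambda/2$ is a finite real number, the $\limsup$ coincides with it, and in particular it is $<+\infty$. Nothing further is required here.

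For the lower bound $\liminf_{t\to+\infty} v(t)/(\log t)>0$, I would split into the three mutually exclusive cases of Proposition \ref{Prop:total-speed-lowerb}. If $(\phi_t)$ is hyperbolic with spectral value $\lambda>0$, that proposition yields a constant $C\in\R$ with $v(t)\geq \frac{\lambda}{2}t-C$ for all large $t$; dividing by $\log t$ and using $t/\log t\to+\infty$ shows $v(t)/(\log t)\to+\infty$, so the $\liminf$ is $+\infty>0$. If $(\phi_t)$ is parabolic of positive hyperbolic step, the proposition gives $C\in\R$ with $v(t)\geq \log t-C$ for $t$ large, whence $\liminf_{t\to+\infty} v(t)/(\log t)\geq 1>0$. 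Finally, if $(\phi_t)$ is parabolic of zero hyperbolic step, one has $v(t)\geq \frac{1}{4}\log t-C$ for $t$ large, whence $\liminf_{t\to+\infty} v(t)/(\log t)\geq \frac14>0$. In every case the $\liminf$ is strictly positive, which is the claim.

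There is no genuine obstacle: the only point deserving a word of care is that in the hyperbolic case the quotient $v(t)/(\log t)$ in fact diverges to $+\infty$ rather than converging to a finite positive constant, but this is consistent with — and stronger than — the stated inequality. The constant $C$ furnished by Proposition \ref{Prop:total-speed-lowerb} depends on the semigroup (through its Koenigs function), yet since we only take a $\liminf$ as $t\to+\infty$ this dependence is irrelevant.
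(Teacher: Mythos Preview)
Your proof is correct and follows exactly the approach indicated in the paper, which states the corollary as a direct consequence of Proposition~\ref{Prop:total-speed} and Proposition~\ref{Prop:total-speed-lowerb} without spelling out the details. Your case-by-case analysis for the lower bound and the direct invocation of \eqref{Eq:rate conv hyp} for the upper bound are precisely the intended argument.
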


As it is clear from the proof of the previous proposition, one can get lower or upper estimates on the total speed of convergence according to the geometry of the image of the Koenigs function using the domain monotonicity of the hyperbolic distance. We provide here an example of such situation by studying a particular case.

For $\alpha, \beta\in [0,\pi]$, with $\alpha+\beta>0$, we denote 
 $$
 V(\alpha,\beta):=\left\{re^{i\theta}:r>0,\;-\alpha<\theta<\beta\right\}.
 $$
 
 \begin{proposition}\label{Prop:precise-est-angles}
Let $(\phi_t)$ be a non-elliptic semigroup in $\D$ with Koenigs function $h$. Suppose $h(\D)=p+iV(\alpha,\beta)$ for some $\alpha, \beta\in (0,\pi]$ with $\alpha+\beta>0$.  
\begin{enumerate}
\item If $\alpha>0, \beta>0$ then there exists $C>0$ such that $v^T(t)\leq C$ and
\[
|v^o(t)-\frac{\pi}{2(\alpha+\beta)}\log t|\leq C, \quad |v(t)-\frac{\pi}{2(\alpha+\beta)}\log t|\leq C,
\]
for all $t\geq 0$.
\item  If either $\alpha=0$ or $\beta=0$ then there exists $C>0$ such that for all $t\geq 0$
\begin{equation*}
\begin{split}
& |v(t)-\frac{\pi+\alpha+\beta}{2(\alpha+\beta)}\log t|\leq C\\
& |v^o(t)-\frac{\pi}{2(\alpha+\beta)}\log t|\leq C\\
& |v^T(t)-\frac{1}{2}\log t|\leq C.
\end{split}
\end{equation*}
\end{enumerate}
\end{proposition}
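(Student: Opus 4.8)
The plan is to compute everything in the holomorphic model. Since $h(\D)=p+iV(\alpha,\beta)$, the semigroup acts by vertical translation $w\mapsto w+it$, and the speeds are (up to bounded error, by Lemma \ref{Lem:speed-indip-orbit} and Remark \ref{Rem:speed-inv-conforme}(2)) controlled by $k_{h(\D)}(q, q+it)$ for a fixed base point $q=h(0)$, together with the orthogonal and tangential decomposition relative to the geodesic of $h(\D)$ emanating from $q$ toward the prime end ``$+i\infty$''. So the first step is to replace $h(\D)$ by $iV(\alpha,\beta)$ (translate by $-p$, harmless) and to pick a convenient base point, say $q=i$ (i.e. the point $re^{i\cdot\pi/2}$ with $r=1$ in the rotated coordinates), and identify the relevant geodesic: the ray $\{ir : r>0\}$ is itself a geodesic of $iV(\alpha,\beta)$ since $iV(\alpha,\beta)$ is a sector symmetric-or-not about the positive imaginary axis and that ray bisects nothing in general — but it is still a hyperbolic geodesic as it is the image of a diameter under a conformal map. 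Actually the cleanest move is to unfold the sector: the map $z\mapsto z^{\pi/(\alpha+\beta)}$ (suitably rotated so that $iV(\alpha,\beta)$ goes to $\Ha$) is a biholomorphism $iV(\alpha,\beta)\to\Ha$, and under it the orbit point $i+ t\cdot(\text{something})\cdots$ — more precisely $q+it = i(1+t)$ when we have rotated so the orbit runs up the positive imaginary axis — but in general $q+it$ is $p+i+it$, and after translating to remove $p$ and rotating, the orbit is the vertical ray through $i$, i.e. $\{i+it : t\ge 0\} = \{i(1+t)\}$, which lies on the bisecting ray only if the sector is symmetric. Let me instead keep the orbit on a fixed vertical line and unfold.

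Concretely: set $F(w):=(-iw)^{\mu}$ with $\mu:=\pi/(\alpha+\beta)$, a branch chosen so that $F$ maps the sector $iV(\alpha,\beta)=\{i\zeta: \zeta = re^{i\theta},\, -\alpha<\theta<\beta\}$ onto $\Ha$. Then $F$ is a biholomorphism, and by Remark \ref{Rem:speed-inv-conforme}(2) the orthogonal and tangential speeds of the orbit are (up to the bounded ambiguity coming from the base point, controlled by Lemma \ref{Lem:speed-tan-ort-base}) those of the curve $t\mapsto F(q+it)$ in $\Ha$ with respect to the image geodesic $F(\text{ray})$, which is again a ray $(0,+\infty)$ or more precisely $(0,+\infty)e^{i\delta}$ for some fixed angle $\delta\in(-\pi/2,\pi/2)$. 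Writing $q+it = q'\,(1+it/q')$ and taking $t\to\infty$, one has $q+it \sim it$, hence $F(q+it)\sim (t)^{\mu}e^{i\sigma}$ for a fixed $\sigma$, and more carefully $\arg F(q+it) = \mu\arg(-i(q+it))$, which converges to a limit as $t\to\infty$: write $-i(q+it) = t - i\,\Re p$ roughly (after removing $p$ the real part is bounded), so $\arg(-i(q+it)) = O(1/t)$ and thus $\arg F(q+it)\to 0$. Therefore $F(q+it) = \rho_t e^{i\theta_t}$ with $\rho_t \sim t^{\mu}$ and $\theta_t = O(1/t)$.

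Now apply the formulas. By Lemma \ref{Lem:hyper-semipiano}(1), the orthogonal speed $v^o(t) = k_{\Ha}(F(q_0), \rho_t) + O(1) = \frac{1}{2}\log\rho_t + O(1) = \frac{\mu}{2}\log t + O(1) = \frac{\pi}{2(\alpha+\beta)}\log t + O(1)$, which gives the middle line in both (1) and (2). For the tangential speed: by Proposition \ref{Prop:expre-speed-in-disc} (transported to $\Ha$) or by \eqref{Eq:speed-in-H}, $v^T(t) \sim \frac{1}{2}\log\frac{1}{\cos\theta_t}$. In case (1), both $\alpha,\beta>0$ means $iV(\alpha,\beta)$ is a genuine sector with opening $<2\pi$ in which... wait, one must distinguish: if $\alpha+\beta<\pi$ then $\mu>1$ and the unfolding squeezes the vertical orbit toward the real axis in $\Ha$, so $\theta_t$ stays bounded away from $\pm\pi/2$ and in fact $\theta_t\to 0$ geometrically, giving $v^T(t)\le C$; if $\alpha+\beta=\pi$ the sector is a half-plane and the same holds. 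Either way $\cos\theta_t$ is bounded below, so $v^T(t)\le C$, and then \eqref{Eq:split-speed-semig} gives $|v(t)-v^o(t)|\le C$, completing (1). In case (2), say $\beta=0$ (the case $\alpha=0$ is symmetric): then $iV(\alpha,0)$ is a sector one of whose edges is the ray $\{ir:r>0\}$ itself, i.e. the geodesic we project onto lies on the boundary; equivalently, after unfolding, the vertical orbit $q+it$ maps to a curve whose argument $\theta_t$ now tends to $\pm\pi/2$. A direct computation of $\arg F(q+it)$ shows $\theta_t \to \pm\pi/2$ at the rate $\frac{\pi}{2}-|\theta_t| \asymp 1/t^{\,?}$ — here one must be careful, but the cleanest route is: the edge of the sector being a geodesic edge means $q+it$ stays at bounded hyperbolic distance from that edge as measured within the strip-like neighborhood, so $\delta_{iV(\alpha,0)}(q+it)$, the Euclidean distance to the boundary, is $\asymp 1/\cdots$; actually $\delta_{iV(\alpha,0)}(q+it)\asymp$ (distance from $it$ to the ray $\{ir\}$) which is $0$ in the limit direction — rather, the orbit point $p+i(1+t)$ has distance to the edge ray $p + \{ir:r>0\}$ equal to $1$? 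No: after removing $p$, the orbit is $i(1+t)$ and the edge is $\{ir:r\ge 0\}$, which contains $i(1+t)$, so the orbit runs along the edge. This degeneracy is exactly why the tangential speed blows up. Using the half-plane formula $k_\Ha(1, e^{i\theta}) = \frac12\log\frac{1+\sin|\theta|}{1-\sin|\theta|}$ (Lemma \ref{Lem:hyper-semipiano}(4)) one gets $v^T(t) = k_\Ha(\text{fixed}, \rho_t e^{i\theta_t}) - k_\Ha(\text{fixed},\rho_t) + O(1)$, and computing $\theta_t$ precisely — I expect $\frac{\pi}{2}-|\theta_t| \asymp t^{-1}$, giving $\cos\theta_t\asymp t^{-1}$ and hence $v^T(t) = \frac{1}{2}\log t + O(1)$, the third line of (2). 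Then $v(t) = v^o(t)+v^T(t)+O(1) = \big(\frac{\pi}{2(\alpha+\beta)}+\frac12\big)\log t + O(1) = \frac{\pi+\alpha+\beta}{2(\alpha+\beta)}\log t+O(1)$, the first line, using \eqref{Eq:split-speed-semig}.

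\textbf{Main obstacle.} The routine part is the asymptotics $\rho_t\sim t^{\mu}$; the delicate part is pinning down the exact rate at which $\theta_t\to\pm\pi/2$ in case (2) (and confirming it stays bounded away in case (1)), since the factor $\frac12\log t$ in $v^T(t)$ comes entirely from this rate and an error of the wrong order here would change the coefficient. Concretely one must compute $\arg\big((-i(p+i(1+t)))^{\mu}\big) = \mu\arg(t + \text{(bounded)})$ — here $-i(p+i(1+t)) = (1+t) - i\,\Re p + \cdots$ has argument $\asymp 1/t$ off the positive real axis in general, but when $\beta=0$ the relevant reference edge is rotated so that this small argument gets mapped, after multiplication by $\mu$ and the fixed rotation aligning the edge with a boundary ray of $\Ha$, to something approaching $\pm\pi/2$; one has to track the constant rotations carefully. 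I would handle this by first normalizing: rotate so that the edge $\theta=0$ of $V(\alpha,\beta)$ (i.e. the edge present when $\beta=0$) becomes the positive real axis, apply $w\mapsto w^{\pi/(\alpha+\beta)}$ to land in $\Ha$ with that edge on the positive real axis, and then read off that the image of the (now tilted) vertical orbit line has argument tending to $\pm\pi/2$ at rate exactly $\Theta(1/t)$ by an elementary estimate. Once that rate is in hand, Lemma \ref{Lem:hyper-semipiano}(1),(2),(4) and \eqref{Eq:split-speed-semig} assemble the three displayed estimates mechanically.
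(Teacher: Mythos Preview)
Your approach---transport to the sector model, unfold via $w\mapsto w^{\pi/(\alpha+\beta)}$ to a half-plane, then read off $\rho_t$ and $\theta_t$---is exactly the paper's. The orthogonal-speed estimate in both cases and the boundedness of $v^T$ in case~(1) go through as you describe (modulo the harmless extra rotation needed to land in $\Ha$ rather than a tilted half-plane).

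One confusion to clear up in case~(2): you write that ``the geodesic we project onto lies on the boundary'' and that the orbit ``runs along the edge.'' Neither is true---the base point $q$ must be interior to $iV(\alpha,0)$, so $q=i$ is not an admissible choice there, and the reference geodesic through $q$ toward $+i\infty$ is an interior curve. What \emph{is} true, and what drives $v^T(t)\to\infty$, is that the orbit direction (vertical) is asymptotically parallel to the boundary edge $\{ir:r>0\}$, so after unfolding the argument of the image orbit tends to a boundary angle of $\Ha$. The paper handles this cleanly by choosing the base point on the \emph{bisector} of the sector, writing the orbit in the symmetrized sector $W$ as $1+te^{i\alpha/2}=:\rho_t e^{i\theta_t}$, and then showing via the Mean Value Theorem applied to $\arccos$ that $t\cos(\theta_t\pi/\alpha)$ converges to a finite positive limit. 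This is precisely your ``$\tfrac{\pi}{2}-|\theta_t|\asymp t^{-1}$'' made rigorous, and once you have it the three displayed estimates in~(2) follow from \eqref{Eq:split-speed-semig} exactly as you say.
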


\begin{proof} Without loss of generality, up to a translation, we can  assume that $p=0$. Moreover, by Lemma \ref{Lem:speed-indip-orbit}, in order to get asymptotic estimates of $v(t)$ and $v^o(t)$, it is enough to estimate $\omega(z_0, \phi_t(z_0))$ for any suitably chosen $z_0\in \D$. Note that $\omega(z_0, \phi_t(z_0))=k_V(h(z_0), h(z_0)+it)$, where  $V:=V(\alpha,\beta)$. 

In case $\alpha, \beta>0$, we choose $h(z_0)=i$. Note that $V=R(W)$, where $R(z)=ie^{i(\beta-\alpha)/2}z$ and  
\[
W:=\{\rho e^{i\theta}: \rho>0, |\theta|<(\alpha+\beta)/2\}.
\]
Hence, taking into account that $h(z_0)=i$, we have 
\[
k_V(h(z_0), h(z_0)+it)=K_W(e^{i(\alpha-\beta)/2}, e^{i(\alpha-\beta)/2}(1+t)).
\]  

The map $f:W\to \Ha$ given by $f(w):=w^{\pi/(\alpha+\beta)}$ is a biholomorphism. Therefore, if we set $\theta_0:=\frac{\pi(\alpha-\beta)}{2(\alpha+\beta)}$, we have
\begin{equation*}
k_V(h(z_0), h(z_0)+it)=k_\Ha(e^{i\theta_0}, e^{i\theta_0}(1+t)^{\pi/(\alpha+\beta)}). 
\end{equation*}
Now,  by Lemma \ref{Lem:hyper-semipiano}(6),
\begin{equation*}
\begin{split}
&|k_\Ha(e^{i\theta_0}, e^{i\theta_0}(1+t)^{\pi/(\alpha+\beta)})- k_\Ha(1, (1+t)^{\pi/(\alpha+\beta)})|\\&\leq |k_\Ha(e^{i\theta_0}, e^{i\theta_0}(1+t)^{\pi/(\alpha+\beta)})- k_\Ha(1, e^{i\theta_0}(1+t)^{\pi/(\alpha+\beta)})|\\&+|k_\Ha(1, e^{i\theta_0}(1+t)^{\pi/(\alpha+\beta)})- k_\Ha(1, (1+t)^{\pi/(\alpha+\beta)})|\\&\leq k_\Ha(1, e^{i\theta_0})+k_\Ha(e^{i\theta_0}(1+t)^{\pi/(\alpha+\beta)}, (1+t)^{\pi/(\alpha+\beta)})|\\&\leq k_\Ha(1, e^{i\theta_0})+\frac{1}{2}\log \frac{2}{\cos \theta_0}.
\end{split}
\end{equation*}
Since $k_\Ha(1, (1+t)^{\pi/(\alpha+\beta)})=\frac{1}{2}\log (1+t)^{\pi/(\alpha+\beta)}$, the previous considerations show that there exists $C>0$ such that
\[
|k_V(h(z_0), h(z_0)+it)-\frac{\pi}{2(\alpha+\beta)}\log t|<C
\]
for all $t\geq 0$, and we are done in case $\alpha, \beta>0$. 

Now we assume that $\beta=0$ (the case $\alpha=0$ being similar). In this case, we choose $h(z_0)=e^{i (\pi-\alpha)/2}$ (note that $(0,+\infty)\ni t\mapsto te^{i (\pi-\alpha)/2}$ is the symmetry axis of $V$). 
Arguing as before, one can see that 
\begin{equation*}
k_V(h(z_0), h(z_0)+it)=k_W(1, 1+te^{i\alpha/2}). 
\end{equation*}
We write $1+te^{i\alpha/2}=\rho_t e^{i\theta_t}$. Since $f:W\to \Ha$ defined as $f(w)=w^{\pi/\alpha}$ is a biholomorphism, we have
\[
k_W(1, 1+te^{i\alpha/2})=k_\Ha(1, \rho_t^{\pi/\alpha} e^{i(\theta_t\pi)/\alpha}). 
\]
By Proposition \ref{Prop:distance-como-suma}, 
\[
|k_\Ha(1, \rho_t^{\pi/\alpha} e^{i(\theta_t\pi)/\alpha})-k_\Ha(1, \rho_t^{\pi/\alpha} )-k_\Ha(\rho_t^{\pi/\alpha} , \rho_t^{\pi/\alpha} e^{i(\theta_t\pi)/\alpha})|\leq \frac{1}{2}\log 2.
\]
Hence, we are left to compute $k_\Ha(1, \rho_t^{\pi/\alpha} )+k_\Ha(\rho_t^{\pi/\alpha} , \rho_t^{\pi/\alpha} e^{i(\theta_t\pi)/\alpha})$. By Lemma~\ref{Lem:hyper-semipiano}, we have
\[
k_\Ha(1, \rho_t^{\pi/\alpha} )=\frac{\pi}{2\alpha}\log \rho_t, \quad k_\Ha(\rho_t^{\pi/\alpha} , \rho_t^{\pi/\alpha} e^{i(\theta_t\pi)/\alpha})=k_\Ha(1, e^{i(\theta_t\pi)/\alpha}),
\]
and 
\[
|k_\Ha(1, e^{i(\theta_t\pi)/\alpha})-\frac{1}{2}\log \frac{1}{\cos(\frac{\theta_t\pi}{\alpha})}|<\frac{1}{2}\log 2.
\]
Therefore, there exists $C>0$ such that
\[
|k_V(h(z_0), h(z_0)+it)-\frac{\pi}{2\alpha}\log \rho_t-\frac{1}{2}\log \frac{1}{\cos(\frac{\theta_t\pi}{\alpha})}|<C.
\]
Now, 
\[
\rho_t=\sqrt{t^2+2\cos(\alpha/2)t+1}, \quad \cos \theta_t=\frac{1+\cos(\alpha/2)t}{\rho_t}.
\]
Clearly, $\lim_{t\to +\infty} \frac{\rho_t}{t}=1$, which implies that $\frac{\pi}{2\alpha}\log \rho_t$ goes like $\frac{\pi}{2\alpha}\log t$ as $t\to+\infty$. Let us analyze the asymptotic behavior of the term 
$\frac{1}{2}\log \frac{1}{\cos(\frac{\theta_t\pi}{\alpha})}$. 
Notice that $\lim_{t\to +\infty}\cos \theta_t=\cos(\alpha/2)$ and $\lim _{t\to +\infty} (\rho_t-t)=\cos(\alpha/2)$. Applying the Mean Value Theorem to the function $g(x)=\arccos(x)$, we deduce that for each $x\in [0,1]$ there is a point $\xi$ in the interval of extremes points $x$ and $\cos(\alpha/2)$ such that 
$$
g(x)-\frac{\alpha}{2}=g'(\xi) (x-\cos(\alpha/2)).
$$
Taking $x=\cos (\theta_t)$ we deduce that there is  $\xi_t$ in the interval of extremes points $\cos \theta_t$ and $\cos(\alpha/2)$ such that
$$
\theta_t-\frac{\alpha}{2}=-\frac{1}{\sqrt{1-\xi_t^2}}(\cos(\theta_t)-\cos(\alpha/2)).
$$ 
Clearly, we have that $\lim_{t\to +\infty} \xi_t=\cos(\alpha/2)$. Thus, 
$$
\lim_{t\to +\infty} \frac{\cos(\theta_t)-\cos(\alpha/2)}{\theta_t-\frac{\alpha}{2}}=-\lim_{t\to +\infty} \sqrt{1-\xi_t^2}=-\sin (\alpha/2).
$$
Therefore
\begin{equation*}
\begin{split}
\lim_{t\to +\infty} t\cos(\frac{\theta_t\pi}{\alpha})&=\frac{\pi}{\alpha}\lim_{t\to +\infty} t\, \left(\theta_t -\frac{\alpha}{2}\right)\, 
\frac{\cos(\frac{\theta_t\pi}{\alpha})}{\theta_t \frac{\pi}{\alpha}-\frac{\pi}{2}}=-\frac{\pi}{\alpha}\lim_{t\to +\infty} t\, \left(\theta_t -\frac{\alpha}{2}\right)\\
&=\frac{\pi}{\alpha\, \sin (\alpha/2)}\lim_{t\to +\infty} t\, \left(\cos \theta_t -\cos (\alpha/2)\right)\\
&=\frac{\pi}{\alpha\, \sin (\alpha/2)}\lim_{t\to +\infty} \frac{t}{\rho_t}\, \left(1+\cos (\alpha/2)(t-\rho_t)\right)\\
&=\frac{\pi}{\alpha\, \sin (\alpha/2)}\left(1-\cos^2 (\alpha/2)\right)=\frac{\pi}{\alpha}\sin (\alpha/2)\in (0,+\infty).
\end{split}
\end{equation*}
Thus, $\frac{1}{2}\log \frac{1}{\cos(\frac{\theta_t\pi}{\alpha})}$ goes like $\frac{1}{2}\log t$ as $t\to+\infty$ and the result follows.
\end{proof}

In Proposition \ref{Prop:total-speed} we showed that if $(\phi_t)$ is a parabolic semigroup in $\D$, then $v(t)/t\to 0$ as $t\to+\infty$. This is essentially the only possible upper bound, as the following proposition shows:

\begin{proposition}
Let $g:[0,+\infty)\to [0,+\infty)$ be a function such that $\lim_{t\to+\infty}g(t)=+\infty$ and $\lim_{t\to+\infty}\frac{g(t)}{t}=0$. Then there exists a parabolic semigroup  $(\phi_t)$ in $\D$ of zero hyperbolic step such that
\[
\limsup_{t\to+\infty}\frac{v(t)}{g(t)}=+\infty.
\]
\end{proposition}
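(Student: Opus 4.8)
The plan is to construct the semigroup by prescribing the image of its Koenigs function as a suitably narrow "slit-like" region inside $\C$ whose narrowness at height $it$ degenerates fast enough that the hyperbolic distance $k_{h(\D)}(p, p+it)$ grows much faster than $g(t)$ along a well-chosen subsequence. Recall from Proposition \ref{Prop:total-speed-lowerb} and its proof that the Koebe domain $\mathcal K_0$ already yields $v(t)\sim \tfrac14\log t$, so the generic parabolic semigroup of zero hyperbolic step has total speed of order $\log t$; the point of this proposition is that $g$ may tend to infinity arbitrarily slowly, so $\log t$ itself may dwarf $g(t)$ (e.g.\ $g(t)=\log\log t$), but also $g$ may grow faster than $\log t$, in which case we must genuinely slow down the domain near certain heights. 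The mechanism is domain monotonicity of the hyperbolic distance (Schwarz's Lemma): if $h(\D)\subseteq D$ for some domain $D$ starlike at infinity, then $v(t)\geq k_D(p,p+it)+O(1)$, and the hyperbolic distance across a thin neck is large.

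First I would fix an increasing sequence $t_n\to+\infty$ with $t_{n+1}\geq 2t_n$ and $g(t_n)\leq t_n/n$ (possible since $g(t)/t\to0$, after passing to a subsequence of a dense enough net of heights; concretely choose $t_n$ so that $\sup_{s\geq t_n} g(s)/s\leq 1/n$, which is automatic for large $t_n$, then take any height exceeding that threshold). Next I would build a domain $\Omega\subseteq\C$, starlike at infinity, of the form $\Omega=\{x+iy: |x|<\psi(y)\}$ for a positive continuous function $\psi$ with $\psi\equiv 1$ except on short intervals $[t_n-1,t_n+1]$ around each $t_n$, where $\psi$ dips down to $\varepsilon_n$ for a tiny $\varepsilon_n>0$ to be chosen. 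Such an $\Omega$ is simply connected, is starlike at infinity (translating up only shrinks the obstacle set $\C\setminus\Omega$ upward — one must check this carefully, choosing the slits to be vertical half-lines $\{x=\pm 1, y\leq 0\}$ together with "teeth" pointing inward, so that the complement is a union of upward-unbounded closed sets), and contains the imaginary axis. Let $h:\D\to\Omega$ be a Riemann map and $(\phi_t)$ the induced parabolic semigroup of zero hyperbolic step (zero hyperbolic step because $\Omega=\C$ is not a half-plane — wait, $\Omega\neq\C$, so one checks it is neither strip nor half-plane; to force zero hyperbolic step one instead lets the teeth be bounded so that $\Omega$ is "most of $\C$" and still differs from a half-plane in the required way, which by the classification forces the model domain to be $\C$; the cleanest route is to observe that $h(\D)+it\to\C$ exhausts $\C$ as $t\to\infty$, hence the model domain $\bigcup_{t\geq0}(h(\D)-it)=\C$).

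Then I would estimate from below $k_\Omega(i, i t_n)\geq k_{\Omega}(i t_n - iT, i t_n + iT)$ through the neck at height $t_n$ for a fixed window size, say $T=1$. A thin rectangle (or better, the strip $\{|x|<\varepsilon_n\}$, which contains a neighborhood of the neck and into which $\Omega$ does \emph{not} embed — rather the other inclusion: near the neck $\Omega$ is contained in the vertical strip $\{|x|<\varepsilon_n\}$ only locally, so I would instead use the standard fact that the extremal length of the family of curves in $\Omega$ joining the two components of $\Omega\setminus\{|y-t_n|<1\}$ is large, equivalently the hyperbolic distance across the neck is at least $c\log(1/\varepsilon_n)$ for a universal $c>0$). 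Concretely, composing with a conformal map straightening the neck, one gets $k_\Omega(i,it_n)\geq \tfrac{1}{2}\log(1/\varepsilon_n) - C$ for an absolute constant $C$ independent of $n$ (this is where one uses that the "mouth" of the neck on either side has size comparable to $1$). Now choose $\varepsilon_n$ so small that $\tfrac12\log(1/\varepsilon_n)\geq n\, g(t_n)$; then $v(t_n)\geq n\,g(t_n)-C'$, so $v(t_n)/g(t_n)\to+\infty$, giving $\limsup_{t\to\infty}v(t)/g(t)=+\infty$.

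The main obstacle is the lower bound on the hyperbolic distance through the neck: one must show $k_\Omega(i,it_n)\gtrsim \log(1/\varepsilon_n)$ with a constant uniform in $n$, despite the global geometry of $\Omega$ varying with $n$. The clean way is extremal length / modulus of the ring domain separating $\{y\leq t_n-1\}\cap\Omega$ from $\{y\geq t_n+1\}\cap\Omega$ inside $\Omega\cap\{t_n-2<y<t_n+2\}$: this region is conformally a rectangle of modulus $\asymp \log(1/\varepsilon_n)$ (length of the neck over its width), and the hyperbolic distance in $\Omega$ between the two sides of a crosscut is bounded below by $\pi$ times that modulus minus a universal constant, by the Grötzsch/extremal-length comparison together with the monotonicity of hyperbolic distance under the inclusion of the rectangle-neighborhood into $\Omega$. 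A slightly more elementary alternative, avoiding extremal length, is to map a fixed ball $B$ centered at a point of the neck conformally and note $\Omega\cap B$ is contained in a strip of width $2\varepsilon_n$, so any curve from $i$ to $it_n$ must traverse this strip transversally, and $k_{\text{strip of width }2\varepsilon_n}$ across the strip is exactly $\tfrac{\pi}{2}\cdot\tfrac{1}{2\varepsilon_n}$-ish — actually the hyperbolic distance across a strip of width $w$ between two points separated vertically by $\geq 1$ is at least $\sim 1/w$, which is even stronger than $\log(1/\varepsilon_n)$; one then only needs $1/\varepsilon_n$ large, simplifying the choice of $\varepsilon_n$. I would present the strip-crossing version as the core estimate and organize the rest around it.
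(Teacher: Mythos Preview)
Your construction has a structural flaw that cannot be repaired without abandoning the core idea. For $\Omega$ to be starlike at infinity one needs $\Omega+it\subseteq\Omega$ for all $t\ge0$; taking complements, this says that if $w\notin\Omega$ then $w-is\notin\Omega$ for every $s\ge0$, i.e.\ the complement is a union of \emph{downward} half-lines (you write ``upward-unbounded'', which is the wrong direction). For a domain of the form $\{x+iy:|x|<\psi(y)\}$ this forces $\psi$ to be nondecreasing. Hence an isolated narrow neck at height $t_n$ is impossible: if the width is $2\varepsilon_n$ at height $t_n$ it is at most $2\varepsilon_n$ at every height below $t_n$. The picture of a fat strip that ``dips down'' briefly to width $2\varepsilon_n$ and then reopens is therefore never the image of a Koenigs function, and neither are bounded ``teeth''. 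On top of this, a domain contained in a vertical strip of finite width has $\bigcup_{t\ge0}(\Omega-it)$ equal to a strip, so the resulting semigroup is hyperbolic, not parabolic of zero hyperbolic step. Your extremal-length/strip-crossing estimate is correct for genuine necks, but starlike-at-infinity domains simply do not have necks, so it never applies.

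The paper's construction resolves both issues at once by removing from $\C$ pairs of vertical half-lines $\{\Re z=\pm a_j,\ \Im z\le b_j\}$ with $a_j\nearrow\infty$; this is visibly starlike at infinity and satisfies $\bigcup_{t\ge0}(\Omega-it)=\C$. The mechanism for large $v(t)$ is the opposite of a short thin neck: it is a \emph{long} corridor of fixed moderate width. For $t$ between a threshold $x_j$ and $b_{j+1}$ one has $\delta_\Omega(it)=a_{j+1}$, so the standard density bound $\kappa_\Omega\ge\tfrac{1}{4\delta_\Omega}$ integrated along the (geodesic) imaginary axis gives
\[
v(b_{j+1})\ \ge\ \frac{1}{4}\int_{x_j}^{b_{j+1}}\frac{dr}{a_{j+1}}\ =\ \frac{b_{j+1}-x_j}{4\,a_{j+1}}.
\]
The hypothesis $g(t)/t\to0$ is used to choose $b_{j+1}$ so large that $b_{j+1}-x_j\ge j\,a_{j+1}\,g(b_{j+1})$, whence $v(b_{j+1})/g(b_{j+1})\ge j/4$. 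Note the reversal of roles compared to your plan: instead of shrinking the width you stretch the length, and $g(t)=o(t)$ governs the choice of $b_{j+1}$ rather than of $\varepsilon_n$. If you want to rescue the symmetric-domain format $\{|x|<\psi(y)\}$, take $\psi$ nondecreasing with $\psi\to\infty$ and with long plateaus---but that is precisely the paper's corridor idea in different clothing.
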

\begin{proof}
Let $\{a_j\}$ be a strictly increasing sequence of  positive real numbers, $a_1>0$, $\lim_{j\to+\infty} a_j=+\infty$. Let $\{b_j\}$ be a strictly increasing sequence of positive real numbers to be chosen later on. Let 
\[
\Omega:=\C\setminus \left(\bigcup_{j=1}^{\infty}\{z\in \C: \Re z=\pm a_j, \Im z\leq b_j\}\right).
\]
Note that $\Omega$ is simply connected and starlike at infinity. Let $h:\D \to \Omega$ be a Riemann map such that $h(0)=0$, and let $\phi_t(z):=h^{-1}(h(z)+it)$ for $z\in \D$ and $t\geq 0$. Then $(\phi_t)$ is a semigroup in $\D$ and, since $\bigcup_{t\geq 0}(\Omega-it)=\C$, it follows that $(\phi_t)$ is parabolic of zero hyperbolic step. 

In order to estimate the total speed $v(t)$ of $(\phi_t)$, note that $\Omega$ is symmetric with respect to the imaginary axis $i\R$, hence  the orbit $[0,+\infty)\ni t\mapsto it$ is a geodesic in $\Omega$, and so is $[0,+\infty)\ni t\mapsto \phi_t(0)$ in $\D$. 

In particular, if we set $\gamma(t)=it$, we have
\begin{equation}\label{Eq:total-speed-semi-fast1}
\begin{split}
v(t)&=\omega(0, \phi_t(0))=k_\Omega(0, it)=\int_0^t \kappa_\Omega(\gamma(r);\gamma'(r))dr\\&\geq \frac{1}{4}\int_0^t \frac{dr}{\delta_\Omega(ir)},
\end{split}
\end{equation}
where the last inequality follows from the classical estimates on the hyperbolic metric (see, {\sl e.g.}, \cite{BCGD})

Now, we claim that we can choose the $b_j$'s in such a way that for every $j\geq 1$ there exists $x_j\in (b_j, b_{j+1})$ such that $\delta_\Omega(it)=a_{j+1}$ for every $t\in [x_j, b_{j+1}]$ and such that
\begin{equation}\label{Eq:good-bj-cres}
b_{j+1}-x_j\geq j a_{j+1}g(b_{j+1}).
\end{equation}
Indeed, set $b_1=1$. Let $x_1>1$ be such that $|ix_1-(a_1+ib_1)|=a_2$. Simple geometric consideration shows that, if we take $b_2>x_1$ then $\delta_\Omega(it)=a_2$ for every $t\in [x_1, b_2]$. Moreover, since $g(t)/t\to 0$ as $t\to+\infty$, we can find $b_2>x_1$ such that 
\[
\frac{a_2 g(b_2)+x_1}{b_2}<1. 
\]
Therefore, there exist $x_1, b_2$ such that \eqref{Eq:good-bj-cres} is satisfied for $j=1$. Now, we can argue by induction is a similar way. Suppose we constructed $b_1, \ldots, b_j$ and $x_1, \ldots, x_{j-1}$ for $j>1$. Then we select $x_j$ in such a way that $|ix_j-(a_j+ib_j)|=a_{j+1}$ and, again since $g(t)/t\to 0$ as $t\to+\infty$, we choose $b_{j+1}>x_j$ such that $\frac{j a_{j+1} g(b_{j+1})+x_j}{b_{j+1}}<1$.

Thus, by \eqref{Eq:total-speed-semi-fast1} and \eqref{Eq:good-bj-cres}, we have
\[
v(b_{j+1})\geq \frac{1}{4}\int_0^{b_{j+1}}\frac{dr}{\delta_\Omega(ir)}\geq \frac{1}{4}\int_{x_j}^{b_{j+1}}\frac{dr}{a_{j+1}}=\frac{b_{j+1}-x_j}{4a_{j+1}}\geq \frac{j g(b_{j+1})}{4}.
\]
Therefore,
\[
\frac{v(b_{j+1})}{g(b_{j+1})}\geq \frac{j}{4},
\]
hence $\limsup_{t\to+\infty}\frac{v(t)}{g(t)}=+\infty$, and we are done.
\end{proof}

\section{Orthogonal speed of convergence of parabolic semigroups}
\label{Sec:velocidad ortogonal}

In this section we give estimates on the orthogonal speed of convergence of semigroups. Since the orbits of hyperbolic semigroups converge non-tangentially to the Denjoy-Wolff point, it follows from \eqref{Eq:split-speed-semig} that the total and the orthogonal speeds of hyperbolic semigroups have the same asymptotic behavior. Therefore, we concentrate on parabolic semigroups.

In order to simplify the notation, for any $\alpha\in (0,\pi]$, we write 
$$
V(\alpha):=V(\alpha,0)=\{w=\rho e^{i\theta}:\, \rho>0, \, |\theta|<\alpha\}.
$$

The first part of the following result follows immediately from the fact that $h(\D)$ is contained in the Koebe domain $\C\setminus\{z\in\C: \Re z=\Re p, \Im z\leq \Im p\}$, where $p\in \C\setminus h(\D)$ and Proposition \ref{Prop:precise-est-angles}. While, the second part is a deep result in \cite{BetCD}, where the analogue Euclidean expression is estimated using harmonic measure theory (and then the result in terms of speed follows from Proposition \ref{Prop:expre-speed-in-disc}).

\begin{theorem}\label{Thm:sector}
Let $(\phi_t)$ be a parabolic semigroup, not a group, in $\D$ with Denjoy-Wolff point $\tau\in \partial \D$ and Koenigs function $h$.
 Suppose that $h(\D)$ is contained in a
sector $p+iV(\alpha)$, $p\in\C,\;\alpha\in (0,\pi]$. Then
\begin{equation*}
\liminf_{t\to+\infty}[v(t)-\frac{\pi}{4\alpha}\log t]>-\infty,
\end{equation*}
and,
\begin{equation*}
\liminf_{t\to+\infty}[v^o(t)-\frac{\pi}{4\alpha}\log t]>-\infty.
\end{equation*}
\end{theorem}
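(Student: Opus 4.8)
The plan is to establish the two lower bounds separately: the one for $v(t)$ is a routine domain-monotonicity argument, while the one for $v^o(t)$ is essentially a citation of the Euclidean estimate of \cite{BetCD} combined with the dictionary of Proposition \ref{Prop:expre-speed-in-disc}.

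\emph{The bound for the total speed.} Since $h$ is an isometry for the hyperbolic distances and $h(\phi_t(0))=h(0)+it$, one has $v(t)=k_{h(\D)}(h(0),h(0)+it)$, and the inclusion $h(\D)\subseteq p+iV(\alpha)$ together with domain monotonicity of the hyperbolic distance gives
\[
v(t)\ \geq\ k_{p+iV(\alpha)}(h(0),h(0)+it)\qquad (t\geq 0).
\]
Because $w\mapsto w+it$ maps $p+iV(\alpha)$ into itself (the sector being starlike at infinity), it contracts $k_{p+iV(\alpha)}$, so the right-hand side differs by at most the constant $2k_{p+iV(\alpha)}(h(0),q_0)$ from the total speed $\tilde v(t)$ of a semigroup whose Koenigs image is $p+iV(\alpha)=p+iV(\alpha,\alpha)$, where $q_0$ is the value at $0$ of its Koenigs function. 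Proposition \ref{Prop:precise-est-angles}(1), applied with both opening angles equal to $\alpha$, gives $|\tilde v(t)-\tfrac{\pi}{4\alpha}\log t|\leq C$, whence $v(t)\geq\tfrac{\pi}{4\alpha}\log t-C'$ for all $t\geq 0$. (Alternatively, one avoids Proposition \ref{Prop:precise-est-angles} and computes directly: the biholomorphism $w\mapsto(-i(w-p))^{\pi/(2\alpha)}$ carries $p+iV(\alpha)$ onto $\Ha$ and sends $h(0)$ and $h(0)+it$ to $\xi_0^{\pi/(2\alpha)}$ and $(\xi_0+t)^{\pi/(2\alpha)}$ with $\xi_0:=-i(h(0)-p)\in V(\alpha)$ fixed; writing these in polar form, Lemma \ref{Lem:hyper-semipiano}(5) discards the angular parts and Lemma \ref{Lem:hyper-semipiano}(1) contributes $\tfrac12\log\frac{|\xi_0+t|^{\pi/(2\alpha)}}{|\xi_0|^{\pi/(2\alpha)}}=\tfrac{\pi}{4\alpha}\log t+O(1)$, exactly as in the proofs of Propositions \ref{Prop:total-speed-lowerb} and \ref{Prop:precise-est-angles}.)

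\emph{The bound for the orthogonal speed.} Here domain monotonicity is of no use: $v^o$ is defined through the projection onto the Denjoy--Wolff geodesic $(-1,1)\tau$ of the \emph{disc}, which is not monotone under enlarging $h(\D)$, and within the hypotheses of the theorem the orbit may converge tangentially (e.g. when $h(\D)$ is a proper sub-sector of $p+iV(\alpha)$ with an edge in the vertical direction, the case of Proposition \ref{Prop:precise-est-angles}(2), where $v^T(t)\sim\tfrac12\log t\to+\infty$), so $v^o$ and $v$ need not be comparable. Instead I would pass to the Euclidean picture via Proposition \ref{Prop:expre-speed-in-disc}: applied to $\eta(t)=\phi_t(0)$ and $\sigma=\tau$, it provides $t_0<+\infty$ with $v^o(t)\geq\tfrac12\log\frac{1}{|\tau-\phi_t(0)|}-\tfrac12\log 2$ for $t\geq t_0$, so that the assertion $\liminf_t[v^o(t)-\tfrac{\pi}{4\alpha}\log t]>-\infty$ is equivalent to the Euclidean decay estimate $|\tau-\phi_t(0)|=O(t^{-\pi/(2\alpha)})$ as $t\to+\infty$. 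This last estimate is precisely the content of \cite{BetCD}, where it is proved by harmonic-measure methods: one bounds the harmonic measure, as seen from $h(0)+it$, of the part of $\partial h(\D)$ far from the prime end along which $h(0)+it$ escapes, by comparison with the corresponding quantity for the ambient sector $p+iV(\alpha)$, which is computed through the explicit conformal equivalence of the sector with a half-plane and there decays like $t^{-\pi/(2\alpha)}$, and finally one relates that harmonic measure to $|\tau-h^{-1}(w)|$ by a standard distortion estimate. Substituting $|\tau-\phi_t(0)|=O(t^{-\pi/(2\alpha)})$ into the displayed inequality gives $v^o(t)\geq\tfrac{\pi}{4\alpha}\log t-C$ for all large $t$.

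\emph{Main obstacle.} The first half is bookkeeping: a domain comparison followed by the same sector-to-half-plane computation already carried out in Propositions \ref{Prop:total-speed-lowerb} and \ref{Prop:precise-est-angles}. All the substance is in the second half, namely the harmonic-measure estimate $|\tau-\phi_t(0)|=O(t^{-\pi/(2\alpha)})$ of \cite{BetCD}; I would invoke it rather than reprove it, the contribution on our side being only the translation into orthogonal speed furnished by Proposition \ref{Prop:expre-speed-in-disc} (which itself rests on the hyperbolic Pythagoras inequality, Proposition \ref{Prop:distance-como-suma}).
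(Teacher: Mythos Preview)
Your proposal is correct and follows essentially the same route as the paper: for the total speed you use domain monotonicity $h(\D)\subset p+iV(\alpha)=p+iV(\alpha,\alpha)$ and reduce to the explicit sector computation of Proposition~\ref{Prop:precise-est-angles}(1), while for the orthogonal speed you invoke the Euclidean estimate $|\tau-\phi_t(0)|=O(t^{-\pi/(2\alpha)})$ from \cite{BetCD} and translate it via Proposition~\ref{Prop:expre-speed-in-disc}. Your additional remarks---that domain monotonicity cannot control $v^o$ directly, and the alternative direct half-plane computation for $v$---are accurate and go slightly beyond what the paper spells out, but the overall strategy is the same.
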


\begin{remark}
The previous bounds are sharp, as shown by Proposition \ref{Prop:precise-est-angles}.
\end{remark}

In general, we have the following  bounds (which was proved in its Euclidean counterpart by D. Betsakos \cite{Bet}):

\begin{theorem}\label{Thm:betsakos}
Let $(\phi_t)$ be a parabolic semigroup in $\D$ with Denjoy-Wolff point $\tau\in \partial \D$. 
\begin{enumerate}
\item $\liminf_{t\to+\infty}[v^o(t)-\frac{1}{4}\log t]>-\infty$.
\item If, in addition, the semigroup is of positive hyperbolic step, then $\liminf_{t\to+\infty}[v^o(t)-\frac{1}{2}\log t]>-\infty$.
\end{enumerate}
\end{theorem}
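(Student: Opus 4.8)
The plan is to reduce both items to facts already established above, calling on the Euclidean estimate of Betsakos only where it is genuinely needed. Throughout write $v(t),v^o(t),v^T(t)$ for the total, orthogonal and tangential speeds of $(\phi_t)$. By Proposition~\ref{Prop:expre-speed-in-disc} one has $v^o(t)=\frac12\log\frac{1}{|\tau-\phi_t(0)|}+O(1)$, so that (1) is equivalent to $|\tau-\phi_t(0)|=O(t^{-1/2})$ and (2) to $|\tau-\phi_t(0)|=O(t^{-1})$; I will mostly argue on the hyperbolic side, but this dictionary explains why these are the natural statements and where the one serious input enters.

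I would first dispose of (2), which needs no new input. Combining \eqref{Eq:split-speed-semig} with \eqref{Eq:split-speed-semig2} gives $v(t)\le v^o(t)+v^T(t)\le 2v^o(t)+4\log 2$, hence $v^o(t)\ge\frac12 v(t)-2\log 2$ for all $t\ge 0$. If $(\phi_t)$ has positive hyperbolic step, Proposition~\ref{Prop:total-speed-lowerb} gives $C$ with $v(t)\ge\log t-C$ for large $t$; substituting, $v^o(t)-\frac12\log t\ge-\frac C2-2\log 2$ eventually, so $\liminf_{t\to+\infty}[v^o(t)-\frac12\log t]>-\infty$, which is (2). Since $v^o(t)-\frac14\log t\ge v^o(t)-\frac12\log t$ for $t\ge1$, the same computation also yields (1) whenever $\liminf_{t\to+\infty}[v(t)-\log t]>-\infty$, i.e., by Proposition~\ref{Prop:total-speed-lowerb}, in every parabolic case except that of zero hyperbolic step.

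So the one remaining case of (1) is $(\phi_t)$ parabolic of zero hyperbolic step, hence in particular not a group. Then $h(\D)\neq\C$; choose $q\in\C\setminus h(\D)$. Because $h(\D)$ is starlike at infinity, $w\notin h(\D)$ forces $w-is\notin h(\D)$ for every $s\ge0$ (otherwise $w=(w-is)+is\in h(\D)$), so the downward vertical ray from $q$ misses $h(\D)$ and thus $h(\D)\subseteq\mathcal K_q=q+iV(\pi)$. Applying Theorem~\ref{Thm:sector} with $\alpha=\pi$ (legitimate, as $(\phi_t)$ is parabolic and not a group, and $\frac{\pi}{4\pi}=\frac14$) gives $\liminf_{t\to+\infty}[v^o(t)-\frac14\log t]>-\infty$, finishing (1). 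One could instead deduce both (1) and (2) directly from the Euclidean bounds $|\tau-\phi_t(0)|=O(t^{-1/2})$, resp. $O(t^{-1})$, of Betsakos~\cite{Bet} via Proposition~\ref{Prop:expre-speed-in-disc}.

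The point I expect to carry the real weight is the sharp constant $\frac14$ in the zero hyperbolic step case of (1). The soft argument above ($v^o\ge\frac12 v-O(1)$ together with $v(t)\ge\frac14\log t-O(1)$ from Proposition~\ref{Prop:total-speed-lowerb}) only yields the non-sharp $v^o(t)\ge\frac18\log t-O(1)$, since it controls $1-|\phi_t(0)|$ but not the finer quantity $|\tau-\phi_t(0)|$. The extra strength is precisely what is packaged in Theorem~\ref{Thm:sector} (equivalently, in Betsakos~\cite{Bet}): one passes to the Koenigs domain $h(\D)\subseteq\mathcal K_q$ and bounds the rate of escape of the orbit through harmonic measure, using conformal invariance of harmonic measure, the comparability of $|\tau-\phi_t(0)|$ with the square root of the harmonic measure at $\phi_t(0)$ of a boundary arc around $\tau$, and a Carleman--Ahlfors type estimate for that harmonic measure in $\mathcal K_q$.
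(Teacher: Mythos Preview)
Your proof is correct and essentially coincides with the paper's. For (2) you argue exactly as the paper does: combine \eqref{Eq:split-speed-semig} and \eqref{Eq:split-speed-semig2} to get $v(t)\le 2v^o(t)+4\log 2$ and then invoke Proposition~\ref{Prop:total-speed-lowerb}. For (1) the paper simply picks $p\in\C\setminus h(\D)$, observes $h(\D)\subset p+iV(\pi)$, and applies Theorem~\ref{Thm:sector} with $\alpha=\pi$ in one stroke; your version differs only organizationally, in that you first dispose of the positive hyperbolic step case via (2) and then apply Theorem~\ref{Thm:sector} only in the zero hyperbolic step case---which has the small advantage of explicitly verifying the ``not a group'' hypothesis of Theorem~\ref{Thm:sector}. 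Your closing paragraph correctly identifies where the genuine content lies.
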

\begin{proof} Let $(\Omega, h, z\mapsto z+it)$ be the canonical model of the semigroup.

(1) Take a point $p\in \C\setminus h(\D)$. Since $h$ is starlike at infinity, $h(\D)\subset p+iV(\pi)$ and the result follows immediately from  Theorem \ref{Thm:sector}. 

(2) By \eqref{Eq:split-speed-semig} and \eqref{Eq:split-speed-semig2} we have 
\[
v(t)\leq 2v^o(t)+4\log 2.
\]
Hence, by Proposition \ref{Prop:total-speed-lowerb},
\[
\liminf_{t\to +\infty}[v^o(t)-\frac{1}{2}\log t]\geq \frac{1}{2}\liminf_{t\to +\infty}[v(t)-\log t-2\log2]>-\infty.
\]
\end{proof}

 \begin{remark}
 The  bounds given by Theorem \ref{Thm:betsakos} are sharp (see Proposition \ref{Prop:precise-est-angles}).
\end{remark} 

\begin{remark}
Proposition \ref{Prop:total-speed-lowerb}, Theorem \ref{Thm:betsakos} and  \eqref{Eq:split-speed-semig} imply at once that if $(\phi_t)$ is a non-elliptic semigroup in $\D$ and there exists a constant $C>0$ such that for all $t\geq 0$
\[
|v^o(t)-\frac{1}{4}\log t|<C,
\]
then $\limsup_{t\to+\infty}v^T(t)<+\infty$ and hence $[0,+\infty)\ni t\mapsto \phi_t(z)$ converges non-tangentially to the Denjoy-Wolff point for every $z\in \D$.
\end{remark} 

\section{Open Questions}
 
 The previous results give rise to the following questions:
 
 \medskip
 
 {\sl Question 1:} Suppose $(\phi_t)$ is a non-elliptic semigroup in $\D$. Is it true that $\limsup_{t\to\infty}[v^T(t)-\frac{1}{2}\log t]<+\infty$?
 
 \medskip
 
  {\sl Question 2:} Suppose $(\phi_t)$ is a parabolic semigroup in $\D$ of positive hyperbolic step. Is it true that $|v^T(t)-\frac{1}{2}\log t|<C$ for some constant $C>0$? If so, does this condition characterize parabolic semigroups of positive hyperbolic step?
  
 \medskip
 
  {\sl Question 3:} Suppose $(\phi_t)$ is a parabolic semigroup in $\D$. Is it possible to characterize in dynamical terms when $\lim_{t\to\infty}v^T(t)=\infty$ and $\lim_{t\to\infty}\frac{v^T(t)}{v^o(t)}=0$?
  
  \medskip
 
  {\sl Question 4:} Is it true that the orthogonal speed is essentially monotone? Namely, suppose $(\phi_t)$, $(\tilde\phi_t)$ are a parabolic semigroup in $\D$ with Koenigs' functions $h$ and $\tilde h$ and orthogonal speeds $v^o(t)$ and $\tilde v^0(t)$ respectively. Suppose $h(\D)\subset \tilde h(D)$. Is it true that $\liminf_{t\to\infty}[v^o(t)-\tilde v^o(t)]>-\infty$?
  
 \medskip
 
  {\sl Question 5:} Does there exist a non-elliptic semigroup whose total speed ({\sl respect.} orthogonal speed) does not have a precise asymptotic value? Namely, does there exist a parabolic semigroup such that $\limsup_{t\to\infty}\frac{v(t)}{g(t)}=\infty$ and $\liminf_{t\to\infty}\frac{v(t)}{g(t)}=0$ for some function $g:[0,+\infty)\to [0,+\infty)$ such that $\lim_{t\to \infty}g(t)=\infty$?

\end{document}